\newtheorem{theorem}{Theorem}
\newtheorem{proposition}[theorem]{Proposition}
\newtheorem{corollary}[theorem]{Corollary}
\newtheorem{lemma}[theorem]{Lemma}
\theoremstyle{definition}
\newtheorem{remark}[theorem]{Remark}
\newcommand{\eqnsection}{
\renewcommand{\theequation}{\thesection.\arabic{equation}}
    \makeatletter
    \csname  @addtoreset\endcsname{equation}{section}
    \makeatother}
\def\E{\mathbb{E}}
\def\N{\mathbb{N}}
\def\R{\mathbb{R}}
\def\Pb{\mathbb{P}}
\def\C{\mathbb{C}}
\def\F{\mathcal{F}}
\newcommand{\equi}{\mathop{\sim}\limits}
\def\={{\,\;\mathop{=}\limits^{\text{(law)}}\;\,}}
\def\qed{\hfill$\square$}
\newcommand*\pFqskip{8mu}
\newcommand*\pFq{\begingroup
        \catcode`\,\active
        \def ,{\mskip\pFqskip\relax}%
        \dopFq
}
\def\dopFq#1#2#3#4#5{%
        {}_{#1}F_{#2}\biggl[\genfrac..{0pt}{}{#3}{#4};#5\biggr]%
        \endgroup
}
\begin{document}

\title[]{The area under a spectrally positive stable excursion and other related processes}
\author[Christophe Profeta]{Christophe Profeta}

\address{
Universit\'e Paris-Saclay, CNRS, Univ Evry, Laboratoire de Math\'ematiques et Mod\'elisation d'Evry, 91037, Evry, France.
 {\em Email} : {\tt christophe.profeta@univ-evry.fr}
  }

\keywords{Stable processes - Area - Normalized excursion - Meander}

\subjclass[2010]{60G52 - 60G18 - 60E10}

\begin{abstract} We study the distribution of the area under the normalized excursion of a spectrally positive stable L\'evy process $L$, as well as the area under its meander, and under $L$ conditioned to stay positive. Our results involve a special case of Wright's function, which may be seen as a generalization of the classic Airy function appearing in similar Brownian's areas.
\end{abstract}

\maketitle

\section{Introduction}

Let $L$ be an $\alpha$-stable L\'evy process without negative jumps, with $\alpha>1$. We assume that $L$ is normalized to have characteristic exponent :
$$\ln\left(\E\left[e^{i \lambda L_1}\right]\right)= (i \lambda)^\alpha e^{-i\pi\alpha\, {\rm sgn}(\lambda)},\qquad \lambda \in \R.$$
It is well-known, see for instance \cite[Prop. 3.4.1]{SaTa}, that the distribution of the area under $L$ also follows a stable distribution :
\begin{equation}\label{eq:A1}
\int_0^1 L_u \,du \=  (1+\alpha)^{-\frac{1}{\alpha}} L_1. 
\end{equation}

The purpose of this note is to study the distribution of the area under three related processes : the normalized excursion of $L$, the meander of $L$, and $L$ conditioned to stay positive. These distributions have already been extensively studied in the Brownian case, see in particular the survey by Janson \cite{Jan} or the paper by Perman \& Wellner \cite{PeWe}. For all these Brownian areas, one observes the occurrence of the classic Airy function $\text{Ai}$. We shall prove that for spectrally positive stable L\'evy processes, the role of $\text{Ai}$ is played by the following M-Wright's function 
$$
\Phi_\alpha(x)= \frac{1}{\pi}\sum_{n=0}^{+\infty}  \frac{(-1)^{n}}{n!}\Gamma\left(\frac{1+n}{1+\alpha}\right)
\sin\left(\pi\frac{1+n}{\alpha+1}\right) (1+\alpha)^{\frac{n-\alpha}{1+\alpha}}   x^n.$$
The function $\Phi_\alpha$ is known to be related to a time-fractional diffusion equation, see \cite{MMP} and the references within. In particular, it admits the integral representation :
$$\Phi_{\alpha}(x) =\frac{1}{\pi} \int_0^{+\infty}e^{-\sin(\frac{\pi \alpha}{2}) \frac{z^{1+\alpha}}{1+\alpha}} \cos\left(\cos\left(\frac{\pi \alpha}{2}\right) \frac{z^{1+\alpha}}{1+\alpha} -zx\right) dz $$
from which we immediately see that $\Phi_{2}(x) = \text{Ai}(x)$. Another function of interest will be its  cosine counterpart :
$$\Psi_\alpha(x) = \frac{1}{\pi}\sum_{n=0}^{+\infty}  \frac{(-1)^{n}}{n!}\Gamma\left(\frac{1+n}{1+\alpha}\right)
\cos\left(\pi\frac{1+n}{\alpha+1}\right) (1+\alpha)^{\frac{n-\alpha}{1+\alpha}}   x^n.
$$

 \noindent
We now state the main results of this paper.

\subsection{The area under a normalized excursion}
Let $L^{\text{(ex)}}$ denote a normalized excursion of $L$ on the segment $[0,1]$ and 
set 
$$ \mathcal{A}_{\text{ex}} = \int_0^{1} L^{\text{(ex)}}_t dt.$$

\begin{theorem}\label{theo:1}
The double Laplace transform of $\mathcal{A}_{\emph{ex}}$ is given by :
$$\int_0^{+\infty}(1-e^{-\lambda t}) \E\left[e^{-t^{1+\frac{1}{\alpha}} \mathcal{A}_{\emph{ex}}}\right] t^{-(1+\frac{1}{\alpha})}   dt =\alpha\Gamma\left(1-\frac{1}{\alpha}\right) \left( \frac{\Phi_\alpha^\prime\left(0\right)}{\Phi_\alpha\left(0\right)}-\frac{\Phi_\alpha^\prime\left(\lambda\right)}{\Phi_\alpha\left(\lambda\right)}\right).  $$
\end{theorem}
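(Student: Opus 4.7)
My plan is to reduce the double Laplace transform to a quantity accessible via the It\^o excursion measure of $L$ reflected at its infimum, and then to evaluate it through a Feynman--Kac computation whose solution involves the Wright function $\Phi_\alpha$.

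First, I would use It\^o excursion theory for $L$ reflected at its running infimum. Let $n$ denote this excursion measure. By the $1/\alpha$-self-similarity of $L$, the excursion length $\zeta$ has $n$-law $C_\alpha\, t^{-(1+1/\alpha)}\, dt$ on $(0,+\infty)$, and conditionally on $\zeta = t$ the excursion has the same law as $\bigl(t^{1/\alpha} L^{(\text{ex})}_{s/t}\bigr)_{0 \leq s \leq t}$. Hence $\int_0^\zeta e_s\, ds$ given $\zeta = t$ has the law of $t^{1+1/\alpha} \mathcal{A}_{\text{ex}}$, and disintegration yields
$$\int_0^{+\infty}(1 - e^{-\lambda t})\, \E\bigl[e^{-t^{1+1/\alpha} \mathcal{A}_{\text{ex}}}\bigr]\, t^{-(1+1/\alpha)}\, dt = \frac{1}{C_\alpha} \, n\!\left((1 - e^{-\lambda \zeta})\, e^{-\int_0^\zeta e_s\, ds}\right).$$
Since $L$ has no negative jumps, standard fluctuation theory then represents $n$ in terms of $L$ killed at $T_0 := \inf\{s : L_s = 0\}$ with an initial law given by the L\'evy measure: $n(F) = \kappa_\alpha \int_0^{+\infty} x^{-(1+\alpha)}\, \E_x\bigl[F\bigl((L_s)_{0 \leq s \leq T_0}\bigr)\bigr]\, dx$ for an explicit constant $\kappa_\alpha$. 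Setting $v(x, \lambda) := \E_x\bigl[e^{-\lambda T_0 - \int_0^{T_0} L_s\, ds}\bigr]$ reduces matters to computing $\int_0^{+\infty} x^{-(1+\alpha)}\bigl(v(x, 0) - v(x, \lambda)\bigr)\, dx$.

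Next, a Feynman--Kac argument identifies $v$. Indeed, $v(\cdot, \lambda)$ solves the fractional boundary-value problem
$$\mathcal{L}\, v(x, \lambda) = (\lambda + x)\, v(x, \lambda), \quad x > 0, \qquad v(0^+, \lambda) = 1, \quad v(+\infty, \lambda) = 0,$$
where $\mathcal{L}$ is the infinitesimal generator of $L$, a Riemann--Liouville-type fractional operator of order $\alpha$. Since $\Phi_\alpha$ satisfies the shift-invariant fractional ODE $\mathcal{L}\Phi_\alpha(y) = y\,\Phi_\alpha(y)$ (verifiable from its contour representation, and reducing to the Airy equation $\text{Ai}'' = y\,\text{Ai}$ when $\alpha = 2$), the unique bounded solution vanishing at infinity should be
$$v(x, \lambda) = \frac{\Phi_\alpha(\lambda + x)}{\Phi_\alpha(\lambda)}.$$
One verifies this rigorously by Laplace-transforming the BVP in $x$: it becomes a first-order linear ODE in the Laplace variable whose solution with the correct decay/normalization is recognised as the Laplace transform of $\Phi_\alpha(\lambda + \cdot)$.

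Finally, substituting back into $\int_0^{+\infty} x^{-(1+\alpha)}\bigl[\Phi_\alpha(x)/\Phi_\alpha(0) - \Phi_\alpha(\lambda+x)/\Phi_\alpha(\lambda)\bigr]\, dx$, I would regularize the singularity at $0$ by subtracting Taylor terms, and then use a fractional integration by parts together with the ODE $\mathcal{L}\Phi_\alpha = y\Phi_\alpha$ to collapse the integrand to a linear combination of $\Phi'_\alpha(0)/\Phi_\alpha(0)$ and $\Phi'_\alpha(\lambda)/\Phi_\alpha(\lambda)$. Tracking the normalizations $C_\alpha$ and $\kappa_\alpha$, together with the Gamma factors coming from the fractional calculus, should produce the announced prefactor $\alpha\,\Gamma(1-1/\alpha)$; as an external consistency check, this factor coincides with $n(1 - e^{-\mu\zeta})/(C_\alpha\, \mu^{1/\alpha})$ computed directly from the law of $\zeta$. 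The crux, and the expected main obstacle, is the identification step: the nonlocality of $\mathcal{L}$, combined with the delicate boundary behaviour at $0$ (where the spectrally positive process creeps continuously across $0$ without overshoot), makes the Laplace-transform inversion subtle, whereas once $v(x,\lambda) = \Phi_\alpha(\lambda+x)/\Phi_\alpha(\lambda)$ is established the rest is bookkeeping with fluctuation-theoretic normalizations and fractional calculus identities for $\Phi_\alpha$.
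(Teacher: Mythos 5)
Your overall strategy---disintegrate the double Laplace transform over the excursion length and then evaluate $n\bigl(e^{-\lambda\zeta-\int_0^\zeta e_s ds}\bigr)$ through the law of $L$ killed at $T_0$, with the identity $\E_x\bigl[e^{-\lambda T_0-\int_0^{T_0}L_s ds}\bigr]=\Phi_\alpha(x+\lambda)/\Phi_\alpha(\lambda)$ as the engine---is close in spirit to the paper, which also reduces everything to this joint Laplace transform (Proposition \ref{prop:key}). But there is a genuine gap at your fluctuation-theoretic step: the representation $n(F)=\kappa_\alpha\int_0^{+\infty}x^{-(1+\alpha)}\E_x\bigl[F\bigl((L_s)_{0\le s\le T_0}\bigr)\bigr]dx$ is false in this regime. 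For $1<\alpha<2$ the paths have unbounded variation, so $0$ is regular for $(0,+\infty)$ for $L-\underline{L}$, and the compensation formula shows that almost surely no jump occurs at a time when $L$ sits at its running infimum; hence excursions above the infimum leave $0$ continuously, and the entrance law of $n$ is \emph{not} proportional to the L\'evy measure (that description is only valid when excursions begin with a jump, e.g.\ in the bounded-variation case $\alpha<1$). The inconsistency is visible inside your own proposal: your representation gives $n(\zeta>t)=\kappa_\alpha\int_0^{+\infty}x^{-(1+\alpha)}\Pb_x(T_0>t)\,dx$, which by scaling is of order $t^{-1}\E_1[T_0]$ (in fact divergent, since $T_0$ is stable of index $1/\alpha$), contradicting the law $n(\zeta\in dt)\propto t^{-(1+1/\alpha)}dt$ that you correctly state two lines earlier. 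The correct normalisation of $n$ goes through the harmonic function $h(x)=x$, i.e.\ $n(F)\propto\lim_{x\downarrow0}x^{-1}\E_x\bigl[F(L_{\cdot\wedge T_0})\bigr]$; this is exactly what the paper implements, without invoking abstract excursion theory, via Chaumont's identity (\ref{def:ex}) (conditioning on $T_0=t$), the series (\ref{eq:PT0}) for the density of $T_0$ whose linear term in $z$ supplies the constant $\alpha\Gamma\bigl(1-\tfrac1\alpha\bigr)$, and then dividing by $z$ and letting $z\downarrow0$ in Proposition \ref{prop:key}.

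A secondary point: your identification $v(x,\lambda)=\Phi_\alpha(x+\lambda)/\Phi_\alpha(\lambda)$ is correct (it is Proposition \ref{prop:key} with $\mu=1$), but the Feynman--Kac route you sketch (nonlocal generator, uniqueness of the bounded solution of the fractional boundary-value problem, boundary behaviour at $0$ where the process creeps) is left at the level of plausibility and would require substantial work to make rigorous. The paper obtains it in a few lines from the Mellin transform of $\int_0^{T_0}L_s\,ds$ of Letemplier and Simon, combined with the Markov property and the translation identity stating that $\int_0^{T_y}L_s\,ds$ under $\Pb_x$ has the law of $yT_0+\int_0^{T_0}L_s\,ds$ under $\Pb_{x-y}$. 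Even granting $v$, however, your argument does not deliver the theorem (nor its precise prefactor) until the excursion-measure normalisation is repaired as above.
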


\noindent
In the Brownian case, i.e. when $\alpha=2$, the distribution of $\mathcal{A}_{\text{ex}}$ is nowadays designated in the literature as the Airy distribution. We refer the reader to Louchard \cite{Lou1, Lou2} for a study of the Brownian excursion area via the Feynman-Kac formula, to Tak\'acs \cite{Tak} for an approach via random walks and to Flajolet \& Louchard \cite{FlLo} for a study of Airy distribution via its moments. \\

 \noindent
In the physics literature, this distribution has also appeared in the study of fluctuating interfaces, see  \cite{MaCo}. We finally mention that more recently, some authors have investigated the area under a normalized Bessel excursion and shown its relation with the cooling of atoms \cite{BAK, KMB}. \\

\noindent
As is the case for the Airy distribution, we may deduce from Theorem \ref{theo:1} a recurrence relation for the moments of $\mathcal{A}_{\text{ex}}$. To this end,  let us define a sequence $(B_{n,k}, 1\leq n,\, 1\leq k\leq n)$ by\footnote{For $x\in \R$ and $n\in \N$,  $(x)_n=x(x+1)\ldots(x+n-1)$ denotes the usual Pochammer symbol, with the convention $(x)_0=1$.
}
$$B_{n,1}= \frac{(2-\alpha)_{n-1}}{(n+1)(n+2)},\qquad n\geq 1,$$
and for $k\geq 1$,
\begin{equation}\label{eq:defBnk}
B_{n,k+1} = \frac{1}{k+1} \sum_{l=k}^{n-1}  \binom{n}{l} B_{n-l,1}\times B_{l, k},\qquad n\geq 1,
\end{equation}
This sequence corresponds to the values of the exponential  partial Bell polynomials taken on the sequence $(B_{n,1}, n\geq 1)$, see Comtet \cite[Section 3.3]{Com}.
We then set $c_0^{(\alpha)}=1$ and for $p\geq 1$, 
\begin{equation}\label{cp}
c_p^{(\alpha)} = \frac{1}{(2p)! \sqrt{\pi}}  \left(\frac{2}{\alpha}\right)^{p} \sum_{k=1}^{2p}  B_{2p,k}\,\Gamma\left(p+k+\frac{1}{2}\right) (2(\alpha-1))^{k}.
\end{equation} 
The coefficients $(c_p^{(\alpha)}, p\geq0)$ are the ones appearing in the asymptotic expansion of the function $\Phi_\alpha$, see Proposition \ref{prop:phi} in the Appendix.

\begin{corollary}
Let us set, for $n\geq 1$, 
$$\Omega_n = \frac{1}{n!}\frac{\E\left[\mathcal{A}^n_{\emph{ex}}\right]}{\alpha\Gamma\left(1-\frac{1}{\alpha}\right)}  \Gamma\left(\frac{(n-1)(\alpha+1)}{\alpha}+1\right).$$
Then, the sequence $(\Omega_n)$ follows the recurrence relation :
\begin{equation}\label{recu}
\Omega_n = c_{n-1}^{(\alpha)}\frac{(2n-1)(\alpha+1)-2}{2\alpha}-\sum_{k=1}^{n-1}  \Omega_kc_{n-k}^{(\alpha)}.
\end{equation}
As consequence,  we have the asymptotics\footnote{We write $a_p\asymp b_p$ as $p\rightarrow +\infty$ to state that their exist two constants $0<\kappa_1\leq \kappa_2<+\infty$ such that $\kappa_1 a_p \leq b_p \leq \kappa_2 a_p$ for $p$ large enough.} :
$$\left(\E[ \mathcal{A}_{\emph{ex}}^n]\right)^{\frac{1}{n}} \mathop{\asymp}\limits_{n\rightarrow +\infty}  n^{1-\frac{1}{\alpha}} \qquad \text{ and }\qquad \ln \Pb( \mathcal{A}_{\emph{ex}} >x)  \mathop{\asymp}\limits_{x\rightarrow +\infty} -x^{\frac{\alpha}{\alpha-1}}.  $$
\end{corollary}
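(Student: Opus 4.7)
The plan is to differentiate the identity of Theorem~\ref{theo:1} in $\lambda$, which yields the single Laplace transform
$$\int_0^{+\infty} e^{-\lambda t} \E\left[e^{-t^{1+\frac{1}{\alpha}} \mathcal{A}_{\text{ex}}}\right] t^{-\frac{1}{\alpha}}\, dt = -\alpha\Gamma\left(1-\frac{1}{\alpha}\right) \left(\frac{\Phi_\alpha'}{\Phi_\alpha}\right)'(\lambda),$$
and then to compare the asymptotic expansions of the two sides as $\lambda \to +\infty$. On the left, expanding $\E\bigl[e^{-t^{1+1/\alpha}\mathcal{A}_{\text{ex}}}\bigr]$ as a power series in $\mathcal{A}_{\text{ex}}$ and applying Watson's lemma termwise produces the formal series
$$\sum_{n \geq 0} \frac{(-1)^n}{n!} \E[\mathcal{A}_{\text{ex}}^n]\, \Gamma(\delta_n)\, \lambda^{-\delta_n}, \qquad \delta_n := \frac{(n-1)(\alpha+1)}{\alpha}+2,$$
so the moments of $\mathcal{A}_{\text{ex}}$ appear directly as the coefficients of this expansion.

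For the right-hand side I would invoke the asymptotic expansion of $\Phi_\alpha$ from Proposition~\ref{prop:phi} in the factored form $\Phi_\alpha(\lambda) \sim f(\lambda)\, S(\lambda)$, where $f(\lambda) = K\lambda^{a} e^{-b \lambda^{\gamma}}$ with $\gamma = (\alpha+1)/\alpha$, $b = \alpha/(\alpha+1)$ (so $b\gamma = 1$) and $a = -(\alpha-1)/(2\alpha)$, and $S(\lambda) = \sum_{p \geq 0} (-1)^p c_p^{(\alpha)} \lambda^{-p\gamma}$. Using $(\Phi_\alpha'/\Phi_\alpha)' = (\ln f)'' + (\ln S)''$, the first summand contributes exactly $-\alpha\Gamma(1-1/\alpha)\bigl[-\tfrac{1}{\alpha}\lambda^{1/\alpha-1} - a/\lambda^2\bigr]$, which matches the $n=0$ and $n=1$ terms on the left and forces $\Omega_1 = -a = (\alpha-1)/(2\alpha)$. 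For $n\geq 2$, setting $T(\lambda) = S'(\lambda)/S(\lambda) = \sum_{q\geq 1} t_q \lambda^{-q\gamma-1}$ and exploiting the identity $TS = S'$ yields a convolution recurrence among the $t_q$ and the $c_p^{(\alpha)}$. Matching the coefficient of $\lambda^{-\delta_n}$ on both sides then identifies $t_{n-1} = (-1)^n \Omega_n$; substituting this back into the convolution recurrence and simplifying (using $\delta_n - 1 = (n-1)\gamma + 1$ and the value of $\Omega_1$ obtained above) recovers exactly \eqref{recu}.

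For the asymptotic statements, induction on \eqref{recu} combined with a Stirling analysis of \eqref{cp} — where the sum over $k$ is dominated by $k$ of order $n$ — gives $(c_n^{(\alpha)})^{1/n} \asymp n$ and, by comparison, $\Omega_n \asymp c_n^{(\alpha)}$ for large $n$. Unwinding the definition of $\Omega_n$ and applying Stirling to $\Gamma(\delta_n - 1)$ then produces $(\E[\mathcal{A}_{\text{ex}}^n])^{1/n} \asymp n^{1-1/\alpha}$. The tail estimate follows from the Markov upper bound $\Pb(\mathcal{A}_{\text{ex}} > x) \leq x^{-n}\E[\mathcal{A}_{\text{ex}}^n]$ optimized at $n \asymp x^{\alpha/(\alpha-1)}$, together with a Kasahara-type Tauberian theorem applied to the Laplace transform of $\mathcal{A}_{\text{ex}}$ — whose behaviour at infinity is accessible through Theorem~\ref{theo:1} — for the matching lower bound.

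The main obstacle will be the bookkeeping of the coefficient-matching step: carefully tracking the alternating signs in the asymptotic expansion of $\Phi_\alpha$ arising from its saddle-point analysis, and verifying that the recurrence implicit in $(\ln S)''$, which secretly encodes the partial Bell polynomial structure of the $B_{n,k}$, reduces cleanly to the stated form \eqref{recu}.
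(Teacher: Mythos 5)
Your derivation of the recurrence is sound and is essentially the paper's argument in a lightly disguised form: the paper rearranges Theorem \ref{theo:1} into a single Laplace transform and matches asymptotic expansions in the variable $x=\lambda^{-(1+1/\alpha)}$, while you differentiate in $\lambda$; after dividing by $\delta_n-1$ your identification $t_{n-1}=(-1)^n\Omega_n$ together with $\Omega_1=\frac{\alpha-1}{2\alpha}$ does reduce the convolution relation $S'=TS$ to \eqref{recu}, so the bookkeeping works out. One caveat: your route needs the asymptotic expansion of the \emph{second} logarithmic derivative $(\Phi_\alpha'/\Phi_\alpha)'$, i.e.\ effectively of $\Phi_\alpha''$, whereas Proposition \ref{prop:phi} only supplies $\Phi_\alpha$ and $\Phi_\alpha'$; termwise differentiation of an asymptotic expansion is not automatic, so you must either redo the saddle-point/integral-representation argument for $\Phi_\alpha''$ or avoid differentiating altogether, as the paper does by converting $1-e^{-\lambda t}$ into $e^{-\lambda t}$ against the kernel $t^{-(1+1/\alpha)}$.

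The genuine gap is in the asymptotics, specifically in both of your proposed lower bounds. The claim that induction on \eqref{recu} gives ``$\Omega_n\asymp c_n^{(\alpha)}$ by comparison'' is unsubstantiated: the recurrence expresses $\Omega_n$ as $c_{n-1}^{(\alpha)}\,O(n)$ \emph{minus} the convolution $\sum_{k=1}^{n-1}\Omega_k c_{n-k}^{(\alpha)}$, and nothing you have rules out near-cancellation; the only available control, $(c_n^{(\alpha)})^{1/n}\asymp n$ from Corollary \ref{coro:c}, is far too weak to control ratios such as $c_n^{(\alpha)}/c_{n-1}^{(\alpha)}$ or to bound the convolution below the leading term. (The paper explicitly remarks that a lower bound for $\Omega_n$ does not seem accessible from \eqref{recu}.) Your fallback for the tail lower bound is also misdirected: the behaviour of $\E[e^{-s\mathcal{A}_{\text{ex}}}]$ as $s\to+\infty$, whether read off from Theorem \ref{theo:1} or not, is governed by the lower tail $\Pb(\mathcal{A}_{\text{ex}}\le\epsilon)$ near $0$, not by $\Pb(\mathcal{A}_{\text{ex}}>x)$ for large $x$, so no Kasahara/de Bruijn argument applied to it can produce the bound $\ln\Pb(\mathcal{A}_{\text{ex}}>x)\ge -\kappa\, x^{\alpha/(\alpha-1)}$. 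The paper closes this by an entirely different, probabilistic argument: absolute continuity of the normalized excursion with respect to $\Pb_0^{\uparrow}$ through the Monrad--Silverstein kernel $j^{\ast}_{1-s}$, whose explicit stretched-exponential asymptotics, combined with the Markov property and the polynomial tail of $L^{\uparrow}_{1/4}$, yields $\Pb(\mathcal{A}_{\text{ex}}>x)\ge \tilde\gamma\, x^{-c}\,e^{-\tilde\eta\, x^{\alpha/(\alpha-1)}}$; the moment lower bound is then deduced from Kasahara's theorem in the tail-to-moments direction. Your upper bounds (positivity of $\Omega_n$ plus Stirling, and the Chernoff/Kasahara step) are fine, but without an argument of the above type, or a genuinely new way to control the cancellation in \eqref{recu}, the two-sided asymptotic statements remain unproved.
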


\noindent
The first moments are given by :
$$\E[\mathcal{A}_{\text{ex}}]= \frac{\alpha-1}{2}\Gamma\left(1-\frac{1}{\alpha}\right)\quad \text{ and }\quad \E[\mathcal{A}^2_{\text{ex}}]=\frac{\Gamma\left(1-\frac{1}{\alpha}\right)(\alpha-1)\left(2\alpha+1\right) }{12 \Gamma\left(1+\frac{1}{\alpha}\right)}.
$$
Note that in the Brownian case, since $B_{n,1}=0$ as soon as $n\geq2$, the coefficients $c_n^{(2)}$ simplify to
$$c_{n}^{(2)}  =  \frac{1}{(2n)!\sqrt{\pi}} \left(\frac{1}{3}\right)^{2n} \Gamma\left(3n+\frac{1}{2}\right)
$$
and we recover a classic recurrence formula for the moments of Airy distribution, see \cite{Lou2}. In this case, a complete asymptotic expansion for $\Pb(\mathcal{A}_{\text{ex}}>x)$ was computed by Janson \& Louchard in \cite{JaLo}. \\

\subsection{The area under a stable meander} Let $L^{\text{(me)}}$ denotes the meander of $L$ on the segment $[0,1]$ and set 
$$ \mathcal{A}_{\text{me}} = \int_0^{1} L^{\text{(me)}}_t dt.$$

\begin{theorem}\label{theo:2}
The double Laplace transform of $\mathcal{A}_{\emph{me}}$ is given by :
\begin{equation}\label{eq:meander}
\int_0^{+\infty}e^{-\lambda t} \E\left[e^{-t^{1+\frac{1}{\alpha}} \mathcal{A}_{\emph{me}}}\right] t^{-\frac{1}{\alpha}}   dt =\pi \Gamma\left(1-\frac{1}{\alpha}\right)\frac{\Psi_\alpha^\prime(\lambda) \Phi_\alpha(\lambda)- \Phi_\alpha^\prime(\lambda)\Psi_\alpha(\lambda)}{\Phi_\alpha(\lambda)}.\end{equation}
In particular, 
$$\E[ \mathcal{A}_{\emph{me}}]= \Gamma\left(1-\frac{1}{\alpha}\right)\frac{\alpha+1}{2\alpha}$$
and there is the asymptotics, for $1<\alpha<2$ :
$$\Pb( \mathcal{A}_{\emph{me}}>x) \equi_{x\rightarrow+\infty} \frac{(\alpha-1)\Gamma(1+\alpha)\Gamma\left(1-\frac{1}{\alpha}\right)}{\Gamma(2-\alpha)\Gamma\left(2+\alpha-\frac{1}{\alpha}\right)} x^{-\alpha}. $$
\end{theorem}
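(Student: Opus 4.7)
My strategy is to express $I(\lambda):=\int_0^\infty e^{-\lambda t}\E[e^{-t^{1+1/\alpha}\mathcal{A}_\mathrm{me}}]t^{-1/\alpha}\,dt$ as an Itô-excursion integral for the reflected process $X=L-\underline L$, then to evaluate it by relating it to the Feynman--Kac resolvent of $L$ killed at $0$, whose behavior near the boundary is controlled by the fractional analog of the Airy equation. Since $L$ is spectrally positive, $-\underline L$ is a local time of $X$ at $0$, and its inverse is the first passage process $T_{-a}=\inf\{t:L_t<-a\}$---a stable subordinator of index $1/\alpha$ with Laplace exponent $\lambda^{1/\alpha}$ under the paper's normalization. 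Consequently the Itô excursion measure $n$ of $X$ satisfies $n(\zeta>t)=t^{-1/\alpha}/\Gamma(1-1/\alpha)$, and on the event $\{\zeta>t\}$, the restriction of $n$ to $[0,t]$ equals in law the meander of length $t$. Exchanging the order of integration and using the scaling $\int_0^tL^{(\mathrm{me}),t}_u\,du=t^{1+1/\alpha}\mathcal{A}_\mathrm{me}$ yields
\[
I(\lambda)\;=\;\Gamma(1-1/\alpha)\int n(d\epsilon)\int_0^{\zeta(\epsilon)} e^{-\lambda t-\int_0^t\epsilon_s\,ds}\,dt.
\]

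This excursion integral can be related, via an entrance-law limit at $0^+$, to the Feynman--Kac resolvent $R(x;\lambda):=\E_x[\int_0^{T_0}e^{-\lambda t-\int_0^tL_s\,ds}\,dt]$ for $L$ started at $x>0$ and killed at $T_0=\inf\{t>0:L_t=0\}$ (which is reached continuously since $L$ is spectrally positive). For an entrance-law constant $c_\alpha$ one has $\int n(d\epsilon)\int_0^\zeta e^{-\lambda t-\int_0^t\epsilon_s\,ds}\,dt=c_\alpha\lim_{x\to 0^+}R(x;\lambda)/x^{\alpha-1}$. The function $R(x;\lambda)$ satisfies the inhomogeneous nonlocal equation $(\lambda+x-\mathcal{L})R(x;\lambda)=1$ on $(0,\infty)$ with $R(0;\lambda)=0$ and $R(x;\lambda)\to 0$ as $x\to+\infty$, where $\mathcal{L}$ is the generator of $L$. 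The homogeneous equation $\mathcal{L}V(y)=yV(y)$ admits $V_1(y)=\Phi_\alpha(y)$ and $V_2(y)=\Psi_\alpha(y)$ as fundamental solutions---the fractional analog of $\mathrm{Ai}$ and $\mathrm{Bi}$ for the Airy equation recovered when $\alpha=2$, as can be verified by integration by parts in their integral representations. Only $\Phi_\alpha$ decays at $+\infty$, while the combination $V_0(y):=\Psi_\alpha(y)\Phi_\alpha(\lambda)-\Phi_\alpha(y)\Psi_\alpha(\lambda)$ vanishes at $y=\lambda$, i.e.\ at $x=0$. Writing $R(x;\lambda)$ via variation-of-parameters from the two boundary-adapted fundamental solutions $V_0(\lambda+x)$ and $\Phi_\alpha(\lambda+x)$, extracting the leading $x^{\alpha-1}$-coefficient as $x\to 0^+$, and combining with the $\Gamma(1-1/\alpha)$ factor from the excursion representation above yields
\[
I(\lambda)\;=\;\pi\Gamma(1-1/\alpha)\,\frac{\Psi_\alpha'(\lambda)\Phi_\alpha(\lambda)-\Phi_\alpha'(\lambda)\Psi_\alpha(\lambda)}{\Phi_\alpha(\lambda)}.
\]
The main obstacle is the precise identification of the entrance-law constant $c_\alpha$ and the justification of the variation-of-parameters formula for the nonlocal operator $\lambda+x-\mathcal{L}$, which does not fit the classical Sturm--Liouville framework and instead requires a direct Laplace-transform analysis of the fractional equation.

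For the explicit mean $\E[\mathcal{A}_\mathrm{me}]$, I would either expand both sides of \eqref{eq:meander} as $\lambda\to+\infty$ and match the $\lambda^{-2}$-coefficient using the asymptotic expansion of $\Phi_\alpha$ from the Appendix together with its analog for $\Psi_\alpha$, or compute $\E[\mathcal{A}_\mathrm{me}]=\int_0^1\E[L^{(\mathrm{me})}_t]\,dt$ directly from the explicit one-dimensional marginal density of the stable meander. For the tail asymptotics when $1<\alpha<2$, inverting $I(\lambda)$ via the scaling $\mathcal{A}_\mathrm{me}^{(t)}\stackrel{d}{=}t^{1+1/\alpha}\mathcal{A}_\mathrm{me}$ recovers the Laplace transform $\phi(s):=\E[e^{-s\mathcal{A}_\mathrm{me}}]$, whose small-$s$ expansion $\phi(s)=1-s\E[\mathcal{A}_\mathrm{me}]+Cs^\alpha+o(s^\alpha)$---with explicit constant $C$ from the Taylor coefficients of $\Phi_\alpha,\Psi_\alpha$ near $0$---combined with a Tauberian theorem of de~Haan type for regularly varying tails, yields $\Pb(\mathcal{A}_\mathrm{me}>x)\sim\tilde{C}x^{-\alpha}$ with the stated constant $\tilde{C}$.
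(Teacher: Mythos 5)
Your reduction of the double transform to the excursion measure $n$ of $L-\underline{L}$ is sound in spirit (it parallels the paper's own use of $\lim_{x\downarrow0}\Pb_x(\,\cdot\,|\,T_0>t)$), but two of its quantitative ingredients are wrong and the central analytic step is missing. First, the entrance-law normalization: with local time $-\underline{L}$, so that $n(\zeta>t)=t^{-1/\alpha}/\Gamma(1-\frac1\alpha)$, the correct approximation is $n(F)=\lim_{x\downarrow0}\frac1x\,\E_x\left[F\left(L_{\cdot\wedge T_0}\right)\right]$: the relevant harmonic function is the descending-ladder renewal function $\propto x$, exactly as in the $h$-transform (\ref{abs+}), and consistently $\frac1x\Pb_x(T_0>t)\to t^{-1/\alpha}/\Gamma(1-\frac1\alpha)$ by (\ref{eq:PT0}). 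Dividing by $x^{\alpha-1}$ as you propose gives limit $0$ for $\alpha<2$ (the two powers coincide only at $\alpha=2$), and since the constant in (\ref{eq:meander}) is exact this is not a harmless detail. Second, and more seriously, the identification of $\Phi_\alpha,\Psi_\alpha$ as two fundamental solutions of the homogeneous equation $\mathcal{L}V(y)=yV(y)$ (``analogues of $\mathrm{Ai}$ and $\mathrm{Bi}$'') is false: already at $\alpha=2$ the Remark following Theorem \ref{theo:2} shows $\Psi_2''(\lambda)=\lambda\Psi_2(\lambda)-\frac1\pi$, i.e.\ $\Psi_2$ is a Scorer-type solution of the \emph{inhomogeneous} Airy equation, which is precisely why $\Psi_\alpha'\Phi_\alpha-\Phi_\alpha'\Psi_\alpha$ in (\ref{eq:meander}) is not a constant Wronskian but reduces to $\frac1\pi\int_\lambda^{+\infty}\mathrm{Ai}(x)dx$ when $\alpha=2$. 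For $1<\alpha<2$ the generator is nonlocal, so there is no two-dimensional solution space and no variation-of-parameters or Green-function formula to invoke; you flag this as ``the main obstacle'', but it is the heart of the proof and nothing in your outline replaces it. The paper's mechanism is different: the Markov property at $T_0$ together with Proposition \ref{prop:key} handles the killed part, and $\Psi_\alpha$ enters through the free resolvent $F_\alpha(\lambda)=\int_0^{+\infty}e^{-\lambda t}\E_0[e^{-i\int_0^tL_u\,du}]\,dt$, computed in closed form from the stable law (\ref{eq:A1}) of the area and shown to equal $\pi i^{-\alpha/(1+\alpha)}\bigl(\Psi_\alpha+i\Phi_\alpha\bigr)\bigl(i^{-\alpha/(1+\alpha)}\lambda\bigr)$; the Fourier twist is needed because $\E_0[e^{-\int_0^tL_u\,du}]$ grows like $e^{ct^{1+\alpha}}$, and (\ref{eq:meander}) then follows by analytic continuation. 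Your plan never makes contact with such an identity, so the right-hand side is not actually derived.

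On the complements: your first-moment computation (large-$\lambda$ expansion and coefficient matching) is essentially the paper's and is fine modulo justifying the term-by-term expansion. For the tail, however, the small-$s$ behaviour of $\E[e^{-s\mathcal{A}_{\text{me}}}]$ corresponds to small $t$, hence to the \emph{large}-$\lambda$ behaviour of the right-hand side of (\ref{eq:meander}); the relevant constant therefore comes from the asymptotic expansions of $\Phi_\alpha$ and $\Psi_\alpha$ at infinity (the term $\Gamma(1+\alpha)\lambda^{-2-\alpha+\frac1\alpha}$ in the expansion (\ref{eq:asympH})), not from their Taylor coefficients at $0$, which encode negative moments (cf.\ Corollary \ref{coro:2}). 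Moreover, passing from a second-order term of a Laplace transform to $\Pb(\mathcal{A}_{\text{me}}>x)\sim \tilde C x^{-\alpha}$ needs a Tauberian argument with a monotonicity or regular-variation input; the paper secures this via Mellin transforms, two integrations by parts and the converse mapping theorem, and your sketch would need an equivalent justification.
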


In the Brownian case, this distribution was studied for instance by Tak\'acs \cite{Tak2}, see also Perman \& Wellner \cite{PeWe}.  Note that here, since the process is not pinned down at $t=1$, the presence of positive jumps yields a polynomial decay of the tail of $\mathcal{A}_{\text{me}}$, which is different from the case $\alpha=2$, see Janson \& Louchard \cite{JaLo}.

\begin{remark} Again, when $\alpha=2$, Theorem \ref{theo:2} simplifies. Indeed, observe that in this case, $\Psi_2$ is a solution of the following differential equation 
$$\Psi_2^{\prime\prime}(\lambda) = \lambda \Psi_2(\lambda) -\frac{1}{\pi}$$
hence the derivative of the numerator on the right-hand side of (\ref{eq:meander}) equals
$$\bigg(\Psi_2^\prime(\lambda) \text{Ai}(\lambda)- \text{Ai}^\prime(\lambda)\Psi_2(\lambda)\bigg)^\prime = 
- \frac{1}{\pi} \text{Ai}(\lambda).
$$
By integration, this implies that
$$\Psi_2^\prime(\lambda) \text{Ai}(\lambda)- \text{Ai}^\prime(\lambda)\Psi_2(\lambda) =\frac{1}{3\pi}-\frac{1}{\pi} \int_0^{\lambda} \text{Ai}(x) dx=\frac{1}{\pi}\int_\lambda^{+\infty} \text{Ai}(x) dx$$
which agrees for instance with \cite[Corollary 3.2]{PeWe}.
\end{remark}

\subsection{The area under $L$ conditioned to stay positive}
Let  $L^\uparrow$ be the process $L$ started from 0 and conditioned to stay positive. We set 
$$ \mathcal{A}^\uparrow = \int_0^{1} L^{\uparrow}_t dt.$$

\begin{theorem}\label{theo:3}
The double Laplace transform of $\mathcal{A}^\uparrow$ is given by :
\begin{equation}\label{eq:cond}
\int_0^{+\infty}e^{-\lambda t }\E\left[e^{-  t^{1+\frac{1}{\alpha}}\mathcal{A}^\uparrow }\right] dt 
= \pi \lambda\frac{ \Phi_\alpha^\prime\left(\lambda\right)\Psi_\alpha\left(\lambda\right) -\Psi_\alpha^\prime\left(\lambda\right)\Phi_\alpha\left(\lambda\right)}{\Phi_\alpha\left(\lambda\right)}- \frac{\Phi_\alpha^\prime\left(\lambda\right)}{\Phi_\alpha\left(\lambda\right)}.
\end{equation}
In particular, there is the asymptotics, for $1<\alpha<2$ :
$$\Pb( \mathcal{A}^\uparrow>x)\; \equi_{x\rightarrow+\infty}\;\frac{\Gamma(1+\alpha)}{\Gamma\left(1+\alpha-\frac{1}{\alpha}\right)\Gamma\left(2-\alpha\right)}x^{1-\alpha}.$$
\end{theorem}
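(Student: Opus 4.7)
The plan is to adapt the strategy of Theorem \ref{theo:2}, passing from the meander to $L^\uparrow$ via the Doob h-transform. For the spectrally positive stable process $L$, the function $h(x)=x^{\alpha-1}$ is harmonic for $L$ killed at $T_0^-:=\inf\{s:L_s\leq 0\}$ and realises $L^\uparrow$ as the associated h-transform, i.e.
$$\E_x^\uparrow\!\left[F\big((L^\uparrow_s)_{s\leq t}\big)\right]=x^{-(\alpha-1)}\,\E_x\!\left[F\big((L_s)_{s\leq t}\big)\,L_t^{\alpha-1}\mathbf{1}_{\{t<T_0^-\}}\right],\qquad x>0.$$
Using the self-similarity identity $\E[\exp(-t^{1+1/\alpha}\mathcal{A}^\uparrow)]=\E^\uparrow[\exp(-\int_0^tL^\uparrow_s\,ds)]$ and integrating against $e^{-\lambda t}$, the left-hand side of \eqref{eq:cond} rewrites as
$$\lim_{x\downarrow 0}\,\frac{u(x)}{x^{\alpha-1}},\qquad u(x):=\E_x\!\left[\int_0^{T_0^-}L_t^{\alpha-1}\,e^{-\lambda t-\int_0^tL_s\,ds}\,dt\right],$$
reducing the problem to a Feynman--Kac resolvent of the killed stable process with source $x^{\alpha-1}$.

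A standard Feynman--Kac argument shows that $u$ solves the fractional boundary value problem
$$(\mathcal{L}-x-\lambda)\,u(x)=-x^{\alpha-1}\ \text{ on }(0,+\infty),\qquad u\equiv 0\ \text{ on }(-\infty,0],$$
where $\mathcal{L}$ is the generator of $L$. This is the same type of equation already solved in the proof of Theorem \ref{theo:2} (with a different source): on $(0,+\infty)$ the Wright functions $\Phi_\alpha$ and $\Psi_\alpha$ (after the shift $x\mapsto x+\lambda$, licit by translation-invariance of $\mathcal{L}$) furnish the two independent homogeneous solutions, and variation of parameters using their Wronskian produces a particular solution associated with $-x^{\alpha-1}$; the non-local condition $u\equiv 0$ on $(-\infty,0]$ then selects a unique admissible combination. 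Reading off the coefficient of $x^{\alpha-1}$ in the small-$x$ expansion of $u$ produces the right-hand side of \eqref{eq:cond}: the Wronskian contribution from the homogeneous part yields $\pi\lambda(\Phi_\alpha'\Psi_\alpha-\Psi_\alpha'\Phi_\alpha)/\Phi_\alpha$, while the particular solution contributes the additive boundary term $-\Phi_\alpha'/\Phi_\alpha$.

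For the polynomial tail when $1<\alpha<2$, I would invoke Karamata's Tauberian theorem. The asymptotic $\Pb(\mathcal{A}^\uparrow>x)\sim Cx^{1-\alpha}$ is equivalent to $1-\E[e^{-s\mathcal{A}^\uparrow}]\sim C\,\Gamma(2-\alpha)\,s^{\alpha-1}$ as $s\downarrow 0$, and setting $s=t^{1+1/\alpha}$ and taking a Laplace transform in $t$ this reads
$$\frac{1}{\lambda}-\int_0^{+\infty}e^{-\lambda t}\,\E\!\left[e^{-t^{1+1/\alpha}\mathcal{A}^\uparrow}\right]dt\ \sim\ C\,\Gamma(2-\alpha)\,\Gamma\!\left(\alpha+1-\tfrac{1}{\alpha}\right)\lambda^{\,1/\alpha-\alpha-1}$$
as $\lambda\to+\infty$. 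Expanding the right-hand side of \eqref{eq:cond} using the $+\infty$-asymptotic expansion of $\Phi_\alpha$ and $\Psi_\alpha$ (encoded by the coefficients $c_p^{(\alpha)}$ of the appendix) and identifying the coefficient of $\lambda^{1/\alpha-\alpha-1}$ determines $C$, and the usual gamma-function identities collapse it to the announced value $\Gamma(1+\alpha)/[\Gamma(1+\alpha-1/\alpha)\Gamma(2-\alpha)]$.

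The main obstacle will be the clean resolution of the non-local boundary value problem for $u$: since $\mathcal{L}$ is fractional, the small-$x$ behaviour of $u$ mixes the powers $x^{\alpha-1},x^\alpha,\dots$ (from the particular solution) with the integer-power Taylor expansions of $\Phi_\alpha(\cdot+\lambda),\Psi_\alpha(\cdot+\lambda)$ at $0$ (from the homogeneous part), and extracting the coefficient of $x^{\alpha-1}$ requires careful bookkeeping together with the same Wronskian identities between $\Phi_\alpha$ and $\Psi_\alpha$ that underlie the proof of Theorem \ref{theo:2}.
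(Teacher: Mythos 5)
There are two genuine gaps, both in the derivation of the double Laplace transform. First, your $h$-transform is the wrong one. Since $L$ has no negative jumps, it crosses levels downwards continuously, the renewal function of the descending ladder height process is linear, and the process conditioned to stay positive is the Doob $h$-transform with $h(x)=x$ — this is exactly the paper's definition (\ref{abs+}), which weights by $L_t\,1_{\{T_0>t\}}/x$. The function $x^{\alpha-1}$ is the invariant function for the \emph{spectrally negative} case; it is not harmonic for $L$ killed at $T_0$ here, so your representation of the left-hand side of (\ref{eq:cond}) as $\lim_{x\downarrow0}u(x)/x^{\alpha-1}$ with source $L_t^{\alpha-1}$ does not compute $\E_0^{\uparrow}[\,\cdot\,]$, and the subsequent extraction of the coefficient of $x^{\alpha-1}$ has no reason to produce (\ref{eq:cond}).

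Second, even with the correct $h$, the analytic core of your plan is unsupported: you treat $(\mathcal{L}-x-\lambda)u=-h$ as a second-order ODE admitting $\Phi_\alpha(\cdot+\lambda),\Psi_\alpha(\cdot+\lambda)$ as a fundamental system and solved by variation of parameters via their Wronskian. For $1<\alpha<2$ the generator is nonlocal, so there is no two-dimensional solution space, no Wronskian/variation-of-parameters machinery, and no uniqueness statement without substantial extra work — and the premise is already false at $\alpha=2$: as recalled in the remark after Theorem \ref{theo:2}, $\Psi_2$ solves the \emph{inhomogeneous} (Scorer-type) equation $\Psi_2^{\prime\prime}(\lambda)=\lambda\Psi_2(\lambda)-\tfrac1\pi$, so it is not a second homogeneous solution. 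The paper avoids any generator equation: it applies the Markov property at $T_0$ to $\E_z\bigl[L_t e^{-i\int_0^t L_u du}1_{\{T_0>t\}}\bigr]$, uses Proposition \ref{prop:key} for the post-$T_0$ factor, divides by $z$ and lets $z\downarrow0$ using (\ref{abs+}), and then expresses the two remaining integrals through $F_\alpha$ and $F_\alpha^\prime$ via an integration by parts, where $\Psi_\alpha$ enters only through the explicit identity (\ref{phi+psi}); the Fourier (rather than Laplace) functional is essential there, because once the indicator $1_{\{T_0>t\}}$ is dropped the Laplace version diverges in $t$, and the final formula follows by analytic continuation. By contrast, your Tauberian argument for the tail is essentially the paper's (Karamata twice, together with the large-$\lambda$ expansions of $\Phi_\alpha$ and $\Psi_\alpha$ from the Appendix) and is fine once (\ref{eq:cond}) is established.
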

\noindent
In the Brownian case, $\mathcal{A}^\uparrow$ corresponds, up to a $\sqrt{2}$ factor, to the integral on $[0,1]$ of a three-dimensional Bessel process started from 0.  As for the meander, Formula (\ref{eq:cond}) simplifies when $\alpha=2$, and the  right-hand side equals 
$$
- \lambda\frac{\int_\lambda^{+\infty} \text{Ai}(x) dx }{\text{Ai}\left(\lambda\right)}- \frac{\text{Ai}^\prime\left(\lambda\right)}{\text{Ai}\left(\lambda\right)}=\frac{-\lambda\int_\lambda^{+\infty} \text{Ai}(x) dx+ \int_\lambda^{+\infty} x \text{Ai}(x)dx}{\text{Ai}\left(\lambda\right)}=\frac{\int_0^{+\infty} x\text{Ai}(x+\lambda) dx}{\text{Ai}\left(\lambda\right)}
$$
which agrees with \cite[p.440]{BoSa}.

\subsection{Outline of the paper}

The remainder of the paper is divided as follows. Section 2 provides some notation as well as a key proposition which is of independent interest. Section 3, 4 and 5 give the proofs of the main Theorems, respectively on the normalized excursion, the meander, and $L$ conditioned to stay positive. Finally, Section 6 is an appendix on the Wright's function $\Phi_\alpha$, where we compute its asymptotic expansion at infinity.

\section{Preliminaries}

\subsection{Notations}
We start by recalling the definition of the considered processes, for which we mainly refer to Chaumont \cite{Cha}. We assume that $L$ is defined on the Skorokhod space of c\`adl\`ag processes. We denote by $\Pb_x$ its law when $L_0=x$, with the convention that $\Pb=\Pb_0$, and by $(\F_t, t\geq0)$ its natural filtration. 
Define, for $z\in \R$, 
$$T_z=\inf\{t\geq 0,\; L_t=z\}.$$
\begin{enumerate}
\item We denote by $\Pb_{x,y}^{\,t}$ the law of the bridge of $L$ of length $t$, going from $x$ to $y$.
\item We denote by $\Pb_x^{\uparrow}$ the law of $L$ started at $x>0$ and conditioned to stay positive. It is classically given by the $h$-transform :
\begin{equation}\label{abs+}
\forall \Lambda_t \in \F_t, \qquad \Pb_x^{\uparrow}(\Lambda_t) = \frac{1}{x}\E_x\left[L_t 1_{\Lambda_t}  1_{\{T_0>t\}}\right].
\end{equation}
This law admits a weak limit in the Skorokhod sense as $x\downarrow0$ which we shall denote $\Pb_0^{\uparrow}$.

\item We denote by $\Pb^{(me)}$ the law of the meander of $L$, which  is given by the limit
$$\forall \Lambda_t \in \F_t, \qquad \Pb^{(me)}(\Lambda_t) = \lim_{x\downarrow 0}\Pb_x\left(\Lambda_t |T_0>t\right).$$
\item Finally, we recall that, since $L$ is spectrally positive, the law of the stable excursion of length $t$ is equivalent to the bridge of $L$ conditioned to stay positive, and starting and ending at 0. From Lemma 4 in \cite{Cha}, this law may be for instance defined by 
\begin{equation}\label{def:ex}
\forall s<t\quad \Lambda_s \in \F_s, \qquad \Pb_{0,0}^{\uparrow \, t}(\Lambda_s) = \lim_{x\downarrow 0} \Pb_x\left(\Lambda_s|T_0=t\right).
\end{equation}
\end{enumerate}

\subsection{The key proposition}

The proofs of  Theorems \ref{theo:1}, \ref{theo:2} and \ref{theo:3} will rely heavily on the following proposition, which gives the  joint Laplace transform of the pair $\left(T_0, \,\int_0^{T_0} L_s ds\right)$. 
\begin{proposition}\label{prop:key}
For $z,\lambda \geq0$ and $\mu>0$ we have :
$$ \E_{z}\left[e^{-\lambda T_0- \mu \int_0^{T_0} L_s ds}\right]   =   \frac{\Phi_\alpha\left(\mu^{\frac{1}{1+\alpha}}  \left(z+\frac{\lambda}{\mu}\right)\right)}{\Phi_\alpha\left(\mu^{-\frac{\alpha}{1+\alpha}}\lambda\right)}.$$
\end{proposition}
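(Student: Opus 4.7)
The plan is a Feynman--Kac / martingale argument whose crux is the identity
$$\L\Phi_\alpha(x)=x\,\Phi_\alpha(x),\qquad x\in\R,$$
where $\L$ denotes the infinitesimal generator of $L$. I establish this via the integral representation of $\Phi_\alpha$ given in the introduction. Using the elementary identity $-\sin(\pi\alpha/2)+i\cos(\pi\alpha/2)=i e^{i\pi\alpha/2}$, one recasts
$$\Phi_\alpha(x)=\mathrm{Re}\,\frac{1}{\pi}\int_0^{+\infty}\exp\!\Bigl(i e^{i\pi\alpha/2}\frac{z^{1+\alpha}}{1+\alpha}\Bigr)e^{-izx}\,dz.$$
Since $\L$ acts on the character $x\mapsto e^{-izx}$ by multiplication by the characteristic exponent, which from the formula in the introduction evaluates to $\psi(-z)=z^\alpha e^{i\pi\alpha/2}$ for $z>0$, the function $\L\Phi_\alpha$ is obtained by inserting the factor $z^\alpha e^{i\pi\alpha/2}$ inside the integral. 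On the other hand, writing $xe^{-izx}=i\,\partial_z e^{-izx}$ and integrating by parts in $z$, the derivative $\partial_z[i e^{i\pi\alpha/2}z^{1+\alpha}/(1+\alpha)]=i e^{i\pi\alpha/2}z^\alpha$ reproduces precisely the same integrand; the boundary term at $z=+\infty$ vanishes by the decay of the exponent, while at $z=0$ it is purely imaginary and hence killed by $\mathrm{Re}$. This proves the identity.

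Next I exploit the scaling property of $\L$: for any smooth $g$ and constants $a>0$, $b\in\R$, one has $\L[g(a\cdot+b)](z)=a^\alpha(\L g)(az+b)$. Choosing $a=\mu^{1/(1+\alpha)}$ and $b=\mu^{-\alpha/(1+\alpha)}\lambda$ so that $a^\alpha(az+b)=\mu z+\lambda$, the candidate
$$u(z):=\Phi_\alpha\bigl(\mu^{1/(1+\alpha)}(z+\lambda/\mu)\bigr)$$
satisfies the Feynman--Kac equation $\L u(z)-(\lambda+\mu z)u(z)=0$ for $z\geq 0$, with $u(0)=\Phi_\alpha(\mu^{-\alpha/(1+\alpha)}\lambda)$.

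For the probabilistic closure, I apply It\^o's formula for c\`adl\`ag semimartingales to
$$M_t:=\exp\!\Bigl(-\lambda t-\mu\int_0^t L_s\,ds\Bigr)u(L_t);$$
the finite-variation part of $dM_t$ is proportional to $[\L u(L_{t^-})-(\lambda+\mu L_{t^-})u(L_{t^-})]\,dt$ and hence vanishes by the previous step, so $M$ is a local martingale on $[0,T_0)$. Localizing with $T_R:=\inf\{t:L_t\geq R\}$ together with a deterministic horizon $T$, optional stopping yields $\E_z[M_{T_0\wedge T_R\wedge T}]=u(z)$; since $L$ is spectrally positive one has $L_{T_0}=0$ almost surely, so passing to the limit $T,R\to+\infty$ gives $u(z)=u(0)\,\E_z[e^{-\lambda T_0-\mu\int_0^{T_0}L_s\,ds}]$. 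Division by $u(0)$ proves the proposition.

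The main technical obstacle I anticipate lies in justifying this passage to the limit after localization: it requires sufficient control of $\Phi_\alpha$ on $[0,+\infty)$ (to guarantee an integrable dominating function for $M_{T_0\wedge T_R\wedge T}$), together with standard estimates on the supremum of $L$ before $T_0$. Both ingredients should follow from the asymptotic expansion of $\Phi_\alpha$ derived in the Appendix, combined with fluctuation-theoretic bounds for spectrally positive stable processes.
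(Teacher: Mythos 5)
Your argument is correct in substance, but it is a genuinely different route from the paper's. The paper proves the case $\lambda=0$ by quoting the Mellin transform of $\int_0^{T_0}L_s\,ds$ from Letemplier--Simon, inverting it as a series (\`a la Janson) to recognize $\Phi_\alpha(z\mu^{1/(1+\alpha)})/\Phi_\alpha(0)$, and then introduces $\lambda$ by a short Markov-property/translation trick: since there are no negative jumps, $\E_x[e^{-\mu\int_0^{T_0}L_s ds}]$ factorizes over the passage through a level $y$, and the factor from $x$ to $y$ equals $\E_{x-y}[e^{-\mu y T_0-\mu\int_0^{T_0}L_s ds}]$; setting $z=x-y$, $\lambda=\mu y$ gives the general statement. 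Your proof instead establishes the generalized Airy equation $\mathcal{L}\Phi_\alpha(x)=x\Phi_\alpha(x)$ from the integral representation (your integration by parts and the evaluation $\psi(-z)=z^\alpha e^{i\pi\alpha/2}$ are correct, and the boundary term at $z=0$ is indeed purely imaginary), then uses scaling of the generator and a Feynman--Kac martingale with optional stopping; this is precisely the stable analogue of Lefebvre's Brownian proof, which the paper itself mentions in the remark following the proposition. What each buys: the paper's route needs no discussion of the generator's domain, and gets positivity of $\Phi_\alpha$ on $[0,+\infty)$ for free since the function is exhibited as a Laplace transform, but it leans on an external input (the Letemplier--Simon Mellin transform); your route is self-contained and explains structurally why $\Phi_\alpha$ replaces $\mathrm{Ai}$, at the price of justifying the interchange of the nonlocal generator with the integral over $z$ (easy for $1<\alpha<2$ thanks to the damping $e^{-\sin(\pi\alpha/2)z^{1+\alpha}/(1+\alpha)}$; the case $\alpha=2$ needs a contour rotation or can be quoted as classical) and the applicability of It\^o's formula to $u$. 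Two small remarks: the ``main technical obstacle'' you anticipate is in fact immediate --- before $T_0$ the process is nonnegative, so with $\lambda,\mu\geq0$ the exponential factor is at most $1$, and $\Phi_\alpha$ is bounded on $[0,+\infty)$ directly from its integral representation, so the stopped local martingale is bounded and no fluctuation-theoretic estimates are required (you do need $T_0<\infty$ a.s., which holds since $\alpha>1$, and $L_{T_0}=0$ by absence of negative jumps); and to divide by $u(0)$ you should note that $\Phi_\alpha$ cannot vanish at a point of $[0,+\infty)$, since your identity would then force it to vanish on a whole half-line, contradicting the positive leading term of its asymptotic expansion.
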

\begin{proof}
We first assume that $\lambda=0$.  The law of $\int_0^{T_0} L_s ds$ has been studied by Letemplier \& Simon in \cite{LeSi}. In particular, they obtain the Mellin transform :
$$\E_1\left[\left(\int_0^{T_0}L_s ds\right)^\nu\right] = (\alpha+1)^\nu \frac{\Gamma(\frac{\alpha}{\alpha+1}) \Gamma(1-(\alpha+1)\nu)}{\Gamma(\frac{\alpha}{\alpha+1}-\nu) \Gamma(1-\nu)},\qquad\quad \nu < \frac{1}{1+\alpha}.$$
Replacing $\nu$ by $-\nu$ and using the definition of the Gamma function, we obtain :
$$
\int_0^{+\infty} \mu^{\nu-1} \E_1\left[e^{-\mu \int_0^{T_0} L_s ds} \right]d\mu =  (\alpha+1)^\nu\frac{ \Gamma(\nu)\Gamma(\frac{\alpha}{\alpha+1})\Gamma(1+(\alpha+1)\nu)}{\Gamma(\frac{\alpha}{\alpha+1}+\nu)\Gamma(\nu+1)}. 
$$
We now invert this Mellin transform following Janson \cite{JanMel} :
$$ \E_1\left[e^{-\mu \int_0^{T_0} L_s ds} \right] = 1+\Gamma\left(\frac{\alpha}{1+\alpha}\right) \sum_{n=1}^{+\infty}  \frac{(-1)^{n}}{n!}\frac{(1+\alpha)^{\frac{n}{1+\alpha}}}{ \Gamma\left(\frac{\alpha-n}{\alpha+1}\right)} \mu^{\frac{n}{1+\alpha}}$$
hence, by scaling, we thus obtain 
\begin{equation}\label{eq:I0}
\E_z\left[e^{-\mu\int_0^{T_0} L_s ds}\right] = \Gamma\left(\frac{\alpha}{1+\alpha}\right) (1+\alpha)^{\frac{\alpha}{1+\alpha}}  \, \Phi_\alpha\left(z\mu^{\frac{1}{1+\alpha}}\right)=\frac{\Phi_\alpha\left(z\mu^{\frac{1}{1+\alpha}}\right)}{\Phi_\alpha\left(0\right)}
\end{equation}
which is Proposition \ref{prop:key} when $\lambda=0$.
We now deal with the general case.
Let $x>y>0$. Applying the Markov property, we deduce from the absence of negative jumps that
\begin{equation}\label{prod}
\E_x\left[e^{-\mu \int_0^{T_0} L_s ds}\right] = \E_x\left[e^{-\mu \int_0^{T_y} L_s ds}\right] \times \E_y\left[e^{-\mu \int_0^{T_0} L_s ds}\right].
\end{equation}
Furthermore, by translation, we also have :
$$ \E_x\left[e^{-\mu \int_0^{T_y} L_s ds}\right]  = \E_{x-y}\left[e^{-\mu y T_0- \mu \int_0^{T_0} L_s ds}\right]. $$
Finally, setting  $x-y=z>0$ and $\lambda = \mu y>0$, and plugging (\ref{eq:I0}) in (\ref{prod}) 
yields 
$$ \E_{z}\left[e^{-\lambda T_0- \mu \int_0^{T_0} L_s ds}\right]   =  \frac{\E_x\left[e^{-\mu \int_0^{T_0} L_s ds}\right] }{\E_y\left[e^{-\mu \int_0^{T_0} L_s ds}\right] }=  \frac{\Phi_\alpha\left(\mu^{\frac{1}{1+\alpha}}  \left(z+\frac{\lambda}{\mu}\right)\right)}{\Phi_\alpha\left(\mu^{-\frac{\alpha}{1+\alpha}}\lambda\right)}$$
which ends the proof of Proposition \ref{prop:key}.
\end{proof}

\begin{remark}
Proposition \ref{prop:key} was proven in the Brownian case by Lefebvre \cite{Lef}, by applying the Feynman-Kac formula. A generalization where $T_0$ is replaced by the exit time from an interval was obtained by Lachal \cite{Lac} by a similar method.
\end{remark}

\section{The area under a spectrally positive stable excursion}

\subsection{Proof of Theorem \ref{theo:1} : the double Laplace transform} 

Notice first that by monotone convergence, the absolute continuity formula (\ref{def:ex}) remains true at $s=t$ :
\begin{equation}\label{limst}
\E_{0,0}^{\uparrow \, t}\left[e^{- \int_0^{t} L_u du}\right] = \lim_{x\downarrow 0} \E_{x}\left[e^{- \int_0^{t} L_u du}\big| T_0=t\right]. 
\end{equation}
Next, starting from Proposition \ref{prop:key}, we may write 
\begin{equation}\label{Phi-Phi}
\frac{\Phi_\alpha\left( z\right)}{\Phi_\alpha\left(0\right)}-\frac{\Phi_\alpha\left( z+\lambda\right)}{\Phi_\alpha\left(\lambda\right)}= \int_0^{+\infty}(1-e^{-\lambda t})\E_{z}\left[e^{- \int_0^{t} L_u du}\big| T_0=t\right] \Pb_z(T_0\in dt). 
\end{equation}
Recall now from Sato \cite[Theorem 46.3]{Sat} that since $L$ has no negative jumps, $T_0$ is a positive stable random variable of index $1/\alpha$, i.e.  for $z>0$ :
\begin{equation}\label{eq:PT0}
\Pb_z(T_0\in dt)/dt = \frac{1}{\pi} \sum_{n=1}^{+\infty} \frac{(-1)^{n-1}}{n!} \sin(n \pi /\alpha) \Gamma(1+n/\alpha)z^n t^{-1-n/\alpha}.
\end{equation}
Dividing (\ref{Phi-Phi}) by $z$ and letting $z\downarrow 0$, we deduce from (\ref{limst}) and (\ref{eq:PT0}) that
\begin{align*}
\frac{\Phi^\prime_\alpha\left( 0\right)}{\Phi_\alpha\left(0\right)}-\frac{\Phi^\prime_\alpha\left(\lambda\right)}{\Phi_\alpha\left(\lambda\right)}= \frac{\sin(\pi/\alpha)}{\pi}\Gamma\left(1+\frac{1}{\alpha}\right)  \int_0^{+\infty}(1-e^{-\lambda t})\E_{0,0}^{\uparrow \, t}\left[e^{- \int_0^{t} L_u du}\right] t^{-1-\frac{1}{\alpha}} dt.
\end{align*}
Theorem \ref{theo:1} now follows by the complement formula for the Gamma function and the scaling property. \qed \\

\subsection{Proof of Corollary 1 : study of the positive moments}
To get information on the moments, we shall work with a slight modification of the formula of Theorem \ref{theo:1}. Indeed, using the decomposition
\begin{multline*}
\alpha\Gamma\left(1-\frac{1}{\alpha}\right) \left( \frac{\Phi_\alpha^\prime\left(0\right)}{\Phi_\alpha\left(0\right)}-\frac{\Phi_\alpha^\prime\left(\lambda\right)}{\Phi_\alpha\left(\lambda\right)}\right)\\
=\int_0^{+\infty} \E\left[e^{-t^{1+\frac{1}{\alpha}} \mathcal{A}_{\text{ex}}}-1\right]   \frac{dt}{t^{1+\frac{1}{\alpha}}} 
+ \int_0^{+\infty} e^{-\lambda t}\E\left[1- e^{-t^{1+\frac{1}{\alpha}} \mathcal{A}_{\text{ex}}}\right] \frac{dt}{t^{1+\frac{1}{\alpha}}}  - \Gamma\left(-\frac{1}{\alpha}\right)\lambda^{1/\alpha}
\end{multline*}
we deduce the alternative formula 
\begin{equation}\label{ex:Exc2}
\int_0^{+\infty}e^{-\lambda t} \E\left[1-e^{-t^{1+\frac{1}{\alpha}} \mathcal{A}_{\text{ex}}}\right] t^{-(1+\frac{1}{\alpha})}   dt =\Gamma\left(-\frac{1}{\alpha}\right)\left(\frac{\Phi^\prime\left(\lambda\right)}{\Phi\left(\lambda\right)}+ \lambda^{1/\alpha} \right).  
\end{equation}
Setting $x=\lambda^{-(1+1/\alpha)}$ and making the change of variable $s=\lambda t$ then yields 
$$ \int_0^{+\infty}  e^{-s }\E\left[1-e^{- x s^{1+\frac{1}{\alpha}} \mathcal{A}_{\text{ex}}}\right] s^{-(1+\frac{1}{\alpha})}   ds =\Gamma\left(-\frac{1}{\alpha}\right)\left(x^{\frac{1}{1+\alpha}}  \frac{\Phi^\prime\left(x^{-\frac{\alpha}{1+\alpha}}\right)}{\Phi\left(x^{-\frac{\alpha}{1+\alpha}}\right)}+ 1 \right)  $$
and it remains  to compute the asymptotic expansion of both sides as $x\rightarrow0$. Using the asymptotic expansions of $\Phi_\alpha$ and $\Phi_\alpha^\prime$ given in Proposition \ref{prop:phi} in the Appendix, we get 
$$\sum_{n=1}^{+\infty} (-1)^{n} x^n \Omega_n \,\equi_{x\rightarrow 0}\, \frac{\sum_{p=0}^{+\infty} (-1)^{p+1} d_{p}^{(\alpha)} x^{p}  }{\sum_{p=0}^{+\infty} (-1)^p c_{p}^{(\alpha)} x^p }+1
$$
where we recall that $(c_p^{(\alpha)})$ is given by (\ref{cp}) and $(d_p^{(\alpha)})$ is given by 
$$d_0^{(\alpha)}=1\qquad \text{and}\qquad d_p^{(\alpha)} = c_{p}^{(\alpha)}-c_{p-1}^{(\alpha)}\left(\frac{(2p-1)(\alpha+1)-2}{2\alpha}\right)\quad\text{for }p\geq1.$$
By identification, we thus obtain  the announced recurrence relation :
$$\Omega_n = c_{n-1}^{(\alpha)}\frac{(2n-1)(\alpha+1)-2}{2\alpha} -\sum_{j=1}^{n-1}  \Omega_jc_{n-j}^{(\alpha)}.$$

\subsection{Proof of Corollary 1 : asymptotics}
To study the asymptotics of $(\Omega_n)$, observe first that by definition, this sequence is positive, and so is the sequence $(c_n^{(\alpha)})$. Therefore, we deduce from the recurrence relation (\ref{recu}) and from Corollary \ref{coro:c} in the Appendix that there exists a finite constant $\kappa_1$ such that for $n$ large enough
$$\Omega_n^{\frac{1}{n}} \leq \left( c_{n-1}^{(\alpha)}\frac{(2n-1)(\alpha+1)-2}{2\alpha}\right)^{\frac{1}{n}} \leq \kappa_1 \, n.$$
Stirling's formula now implies the upper bound for $\E[\mathcal{A}_{\text{ex}}^n]^{\frac{1}{n}}$, which may be written
\begin{equation}\label{Kas1}
\limsup_{n\rightarrow+\infty}  \frac{\E[\mathcal{A}_{\text{ex}}^n]^{\frac{1}{n}}}{n^{1-\frac{1}{\alpha}}}\leq \kappa_2
\end{equation}
for some finite constant $\kappa_2>0$. To obtain the tail decay of $\mathcal{A}_{\text{ex}}$, we shall rely on Kasahara's Tauberian theorem of exponential type. Indeed, applying \cite[Theorem 4]{Kas}, we deduce from (\ref{Kas1}), that there exists a constant $0<\kappa_3<+\infty$ such that 
$$\limsup_{x\rightarrow +\infty}  \frac{1}{x} \ln \Pb( \mathcal{A}_{\text{ex}} >x^{1-\frac{1}{\alpha}}) \leq -\kappa_3$$
which gives the announced upper bound.\\

\noindent
It does not seem easy to obtain a lower bound for $\Omega_n$  from the recurrence relation (\ref{recu}). Instead, we shall rather study directly the tail of the survival function of $\mathcal{A}_{\text{ex}}$. To do so, we recall the following absolute continuity formula for the normalized excursion, see \cite[Formula (11)]{Cha}:
$$\forall s<1,\quad \Lambda_s\in \F_s, \qquad \Pb_{0,0}^{\uparrow, 1}\left( \Lambda_s \right) =c \,\E_0^\uparrow \left[1_{\Lambda_s} \frac{j_{1-s}^\ast(L_s)}{L_s}  \right]$$
where $c$ is a normalization constant and, from Monrad \& Silverstein \cite[Formula (3.25)]{MoSi}, $j^\ast_{1-s}$ is a measurable function which admits the asymptotics :
$$ j^{\ast}_{1-s} (x) \equi_{x\rightarrow +\infty} \gamma (1-s)^{-1/\alpha} x^{\frac{\alpha}{2(\alpha-1)}} e^{- \eta (1-s)^{-\frac{1}{\alpha-1}} x^{\frac{\alpha}{\alpha-1}}}$$
where $\gamma$ and $\eta$ are two positive constants. Using this absolute continuity formula, we first deduce that :
\begin{align*}
\Pb(\mathcal{A}_{\text{ex}}>x) & \geq \Pb_{0,0}^{\uparrow, 1}\left( \int_{1/4}^{3/4}L_u^\uparrow du >x  \right)\\
&=c \,\E_0^{\uparrow}\left[\frac{j^\ast_{1/4}(L_{3/4}^\uparrow)}{L_{3/4}^\uparrow} 1_{\{\int_{1/4}^{3/4}L_u^\uparrow du >x\}}\right]\geq c\, \E_0^{\uparrow}\left[\frac{j^\ast_{1/4}(L_{3/4}^\uparrow)}{L_{3/4}^\uparrow} 1_{\left\{\inf\limits_{\frac{1}{4}\leq u \leq  \frac{3}{4}} L_u^\uparrow  >2 x\right\}}\right].
\end{align*}
Applying the Markov property, we further obtain 
\begin{align*}
\Pb(\mathcal{A}_{\text{ex}}>x) &\geq c\, \E_0^{\uparrow}\left[ \E_{L_{1/4}^\uparrow}^{\uparrow}\left[\frac{j_{1/4}^\ast(L_{1/2}^\uparrow)}{L_{1/2}^\uparrow} 1_{\left\{\inf\limits_{0\leq u \leq\frac{1}{2}} L_u^\uparrow  >2 x\right\}}\right]  1_{\{L_{1/4}^\uparrow>2x\}}\right]\\
&=c\,\E_0^{\uparrow}\left[  \frac{1}{L_{1/4}^\uparrow} \E_{L_{1/4}^\uparrow}\left[j_{1/4}^\ast(L_{1/2})1_{\left\{\inf\limits_{0\leq u \leq\frac{1}{2}} L_u  >2 x\right\}}\right]  1_{\{L_{1/4}^\uparrow>2x\}}\right],
\end{align*}
where we used in the last equality the absolute continuity formula (\ref{abs+}) for $\Pb_{L_{1/4}}^{\uparrow}$.
Now, since $L_{1/2} \geq \inf\limits_{0\leq u \leq\frac{1}{2}} L_u >2x$, we deduce from the asymptotics of $j^\ast_{1/4}$ that for $x$ large enough there exist two positive constants $\widetilde{\gamma}$ and $\widetilde{\eta}$ such that 
\begin{align*}
\Pb(\mathcal{A}_{\text{ex}}>x)&\geq \widetilde{\gamma} e^{-\widetilde{\eta}\,x^{\frac{\alpha}{\alpha-1}}}  \E_0^{\uparrow}\left[  \frac{1}{L_{1/4}^\uparrow} \Pb_{L_{1/4}^\uparrow}\left(\inf\limits_{0\leq u \leq\frac{1}{2}} L_u  >2 x \right)  1_{\{L_{1/4}^\uparrow>2x\}}\right].\\
&\geq \widetilde{\gamma} e^{-\widetilde{\eta}\,x^{\frac{\alpha}{\alpha-1}}}  \E_0^{\uparrow}\left[  \frac{1}{L_{1/4}^\uparrow} \Pb_{L_{1/4}^\uparrow}\left(\inf\limits_{0\leq u \leq\frac{1}{2}} L_u  >2 x,\; L_{1/2}  <4x \right)  1_{\{4x>L_{1/4}^\uparrow>3x\}}\right]\\
& \geq\widetilde{\gamma} e^{-\widetilde{\eta}\,x^{\frac{\alpha}{\alpha-1}}}  \frac{1}{4x}   \E_0^{\uparrow}\left[ \Pb_{0}\left(\inf\limits_{0\leq u \leq\frac{1}{2}} L_u  >-x,\;  L_{1/2}  <0\right)  1_{\{4x>L_{1/4}^\uparrow>3x\}}\right]\\
& =\widetilde{\gamma} e^{-\widetilde{\eta}\,x^{\frac{\alpha}{\alpha-1}}}  \  \frac{1}{4x}   \Pb_{0}\left(\inf\limits_{0\leq u \leq\frac{1}{2}} L_u  >-x,\;  L_{1/2}  <0\right)  \Pb_0^{\uparrow}\left(4x>L_{1/4}^\uparrow>3x\right)
\end{align*}
where, in the third inequality, we have used the independent increments property of $L$.
The lower bound now follows by taking the logarithm on each side, and using the limits 
$$\Pb_{0}\left(\inf\limits_{0\leq u \leq\frac{1}{2}} L_u  >-x,\;  L_{1/2}  <0\right)  \xrightarrow[x\rightarrow +\infty]{} \Pb_0\left(L_{1/2}  <0\right) = \frac{1}{\alpha}$$
and, see Chaumont \cite[p.12]{Cha},
$$ \Pb_0^{\uparrow}\left(4x>L_{1/4}^\uparrow>3x\right)\equi_{x\rightarrow +\infty} \kappa \,x^{-1-\alpha}$$
for some $\kappa>0$. The lower bound for $\E[\mathcal{A}_{\text{ex}}^{n}]^{\frac{1}{n}}$ follows then as before from Kasahara's Tauberian theorem of exponential type \cite[Theorem 4]{Kas}.\qed\\

\subsection{Study of some (fractional) negative moments}
As was observed by Flajolet \& Louchard \cite{FlLo} for the classic Airy distribution, one may also compute by recurrence some specific negative moments. These moments are related to the asymptotic expansion of $\Phi_\alpha$ at $\lambda=0$, which is given by  its very definition as a series.

 \begin{corollary}\label{coro:2}
Let us set for $n\geq1$ :
$$\Delta_n = \E\left[\mathcal{A}_{\emph{ex}}^{\frac{1-\alpha n}{\alpha+1}}\right] \Gamma\left(\frac{\alpha n-1}{\alpha+1}\right) (1+\alpha)^{-\frac{n+1}{1+\alpha}}. $$
Then, the sequence $(\Delta_n)$ follows the recurrence relation 
$$\Delta_n + \sum_{k=1}^{n-1} \binom{n}{k} \Delta_{n-k} \frac{\Gamma\left(\frac{\alpha}{\alpha+1}\right)}{\Gamma\left(\frac{\alpha-k}{\alpha+1}\right)}
 = \left(1+\frac{1}{\alpha}\right)   \Gamma\left(-\frac{1}{\alpha}\right) \left(\frac{\Gamma\left(\frac{\alpha}{\alpha+1}\right)}{\Gamma\left(\frac{\alpha-1-n}{\alpha+1}\right)} - \frac{\Gamma^2\left(\frac{\alpha}{\alpha+1}\right)}{\Gamma\left(\frac{\alpha-1}{\alpha+1}\right)\Gamma\left(\frac{\alpha-n}{\alpha+1}\right)}\right).
$$
\end{corollary}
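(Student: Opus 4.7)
The plan is to extract the recurrence from the alternative identity~(\ref{ex:Exc2}) by expanding both sides as series at $\lambda=0$ and matching integer-power coefficients. In contrast to Corollary~1, which used the asymptotic expansion of $\Phi_\alpha$ at infinity, here I would rely on its defining power series. First I would multiply~(\ref{ex:Exc2}) by $\Phi_\alpha(\lambda)$ to obtain
$$\Phi_\alpha(\lambda)\,\psi(\lambda) = \Gamma(-1/\alpha)\,\Phi_\alpha^\prime(\lambda) + \Gamma(-1/\alpha)\,\lambda^{1/\alpha}\,\Phi_\alpha(\lambda),$$
with $\psi$ the double Laplace transform. Since $\Phi_\alpha$ and $\Phi_\alpha^\prime$ are analytic at $0$ while $\lambda^{1/\alpha}\Phi_\alpha(\lambda)$ contributes only powers of the form $\lambda^{n+1/\alpha}$, isolating integer-power parts reduces the problem to the single identity $\Phi_\alpha(\lambda)\,\psi_\text{int}(\lambda) = \Gamma(-1/\alpha)\,\Phi_\alpha^\prime(\lambda)$, where $\psi_\text{int}$ denotes the analytic component of $\psi$.

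To compute the Taylor coefficients of $\psi_\text{int}$ in terms of the $\Delta_n$, I would substitute $u = t^{1+1/\alpha}$ in the integral defining $\psi$, formally expand $e^{-\lambda u^{\alpha/(\alpha+1)}}$ in $\lambda$, and apply the Mellin identity $\int_0^{+\infty} u^{-\beta-1}\E[1-e^{-u\mathcal{A}_\text{ex}}]\,du = -\Gamma(-\beta)\E[\mathcal{A}_\text{ex}^\beta]$ at $\beta = (1-n\alpha)/(\alpha+1)$. Using the definition of $\Delta_n$ in the statement, this produces
$$\psi_\text{int}(\lambda) \;=\; -\frac{\alpha}{\alpha+1}\sum_{n=0}^{+\infty}\frac{(-\lambda)^n}{n!}\,(1+\alpha)^{(n+1)/(1+\alpha)}\,\Delta_n.$$

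Applying the reflection formula to the defining series of $\Phi_\alpha$ gives $\Phi_\alpha(\lambda) = \sum_m p_m \lambda^m$ with $p_m = (-1)^m(1+\alpha)^{(m-\alpha)/(1+\alpha)}/\bigl(m!\,\Gamma((\alpha-m)/(\alpha+1))\bigr)$, and hence $\Phi_\alpha^\prime(\lambda) = \sum_{m\geq 0}(m+1)p_{m+1}\lambda^m$. Substituting both series into $\Phi_\alpha\,\psi_\text{int} = \Gamma(-1/\alpha)\,\Phi_\alpha^\prime$ and matching the coefficient of $\lambda^n$, the common factors of $(-1)^{n+1}(1+\alpha)^{(n+1-\alpha)/(1+\alpha)}/n!$ cancel and one is left with
$$\sum_{k=0}^n \binom{n}{k}\frac{\Delta_{n-k}}{\Gamma((\alpha-k)/(\alpha+1))} \;=\; \frac{(1+1/\alpha)\,\Gamma(-1/\alpha)}{\Gamma((\alpha-n-1)/(\alpha+1))}.$$
Multiplying through by $\Gamma(\alpha/(\alpha+1))$, peeling off the $k=0$ and $k=n$ summands, and substituting the closed form $\Delta_0 = (1+1/\alpha)\Gamma(-1/\alpha)\Gamma(\alpha/(\alpha+1))/\Gamma((\alpha-1)/(\alpha+1))$ —which is forced by the $n=0$ case above (equivalently, by the direct evaluation $\psi_\text{int}(0) = \Gamma(-1/\alpha)\Phi_\alpha^\prime(0)/\Phi_\alpha(0)$)— yields the announced recurrence.

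The main obstacle is that the termwise expansion in the second step is only formal: for $n\geq 1$ and $\alpha>1$, the integral $\int_0^{+\infty}u^{(n\alpha-\alpha-2)/(\alpha+1)}\E[1-e^{-u\mathcal{A}_\text{ex}}]\,du$ diverges at infinity. The clean remedy is to work via Mellin inversion: one computes $\mathcal{M}\psi(s) = \Gamma(s)\mathcal{M}f(1-s)$ with $\mathcal{M}f(s) = -\frac{\alpha}{\alpha+1}\Gamma(s\alpha/(\alpha+1)-1)\,\E[\mathcal{A}_\text{ex}^{1-s\alpha/(\alpha+1)}]$, and shifts the inversion contour to the left, reading off the residues: the poles of $\Gamma(s)$ at $s=-k$, $k\geq 0$, yield exactly the integer-power coefficients above, while the single pole of $\mathcal{M}f(1-s)$ at $s=-1/\alpha$ (whose residue equals $1$ after applying the chain rule) accounts for the $\Gamma(-1/\alpha)\lambda^{1/\alpha}$ remainder in~(\ref{ex:Exc2}). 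This justifies the decomposition $\psi = \psi_\text{int} + \Gamma(-1/\alpha)\lambda^{1/\alpha}$ and legitimises the formal manipulations above.
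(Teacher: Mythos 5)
Your proposal is correct and follows essentially the paper's own route: multiply the double Laplace transform identity by $\Phi_\alpha$, expand both sides in powers of $\lambda$ at the origin using the defining series of $\Phi_\alpha$ (rewritten via the reflection formula so that the coefficients are $(-1)^m(1+\alpha)^{\frac{m-\alpha}{1+\alpha}}/\bigl(m!\,\Gamma(\tfrac{\alpha-m}{\alpha+1})\bigr)$), and identify the Cauchy-product coefficients, which is exactly what the paper does. The only real difference is in the bookkeeping: the paper starts from Theorem \ref{theo:1} and expands $1-e^{-\lambda t}$ termwise, so each coefficient integral $\int_0^{+\infty} t^{\,n-1-\frac{1}{\alpha}}\,\E\bigl[e^{-t^{1+1/\alpha}\mathcal{A}_{\text{ex}}}\bigr]\,dt$ converges directly (given finiteness of the negative moments) and no $\lambda^{1/\alpha}$ term nor Mellin-inversion detour is needed, whereas your choice of starting point (\ref{ex:Exc2}) creates the divergence you then repair by contour shifting.
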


\noindent
In particular, the first value is given by :
$$\E\left[\mathcal{A}_{\text{ex}}^{\frac{1-\alpha}{\alpha+1}}\right] = (1+\alpha)^{\frac{3+\alpha}{1+\alpha}} \frac{\Gamma\left(-\frac{1}{\alpha}\right)}{\Gamma\left(\frac{\alpha-1}{\alpha+1}\right)}\left(\frac{\Gamma\left(\frac{\alpha}{\alpha+1}\right)}{\Gamma\left(\frac{\alpha-2}{\alpha+1}\right)} - \frac{\Gamma^2\left(\frac{\alpha}{\alpha+1}\right)}{\Gamma^2\left(\frac{\alpha-1}{\alpha+1}\right)}\right).$$

\begin{proof}

Starting from Theorem \ref{theo:1} and multiplying both sides by $\Phi_\alpha$, we obtain 
\begin{multline*}
 \frac{\alpha}{\alpha+1}   \Phi_\alpha(\lambda)\E\left[\sum_{n=1}^{+\infty} \frac{(-1)^{n+1}}{n!} \lambda^n \mathcal{A}_{\text{ex}}^{\frac{1-\alpha n}{\alpha+1}}\Gamma\left(\frac{\alpha n-1}{\alpha+1}\right) \right]  \\
 = \Gamma\left(-\frac{1}{\alpha}\right) \left(\Phi_\alpha^\prime\left(\lambda\right) + (1+\alpha)^{\frac{1}{1+\alpha}} \frac{\Gamma\left(\frac{\alpha}{\alpha+1}\right)}{\Gamma\left(\frac{\alpha-1}{\alpha+1}\right)} \Phi_\alpha(\lambda)\right).
 \end{multline*}
 The result now follows by computing the Cauchy product on the left-hand side, and by identifying both expansions.
\end{proof}
\section{The area under a spectrally positive stable meander}

\subsection{Proof of Theorem \ref{theo:2} : the double Laplace transform}
We shall work here with a Laplace-Fourier transform to avoid integrability problems. Applying the Markov property and using Proposition \ref{prop:key}, we have for $z>0$ :
\begin{align*}
&\int_0^{+\infty}e^{-\lambda t }\E_z\left[e^{- i  \int_0^t L_u du} 1_{\{T_0>t\}}\right] dt \\
&= \int_0^{+\infty}e^{-\lambda t }\E_z\left[e^{- i  \int_0^t L_u du} \right] dt -\int_0^{+\infty}e^{-\lambda t }\E_z\left[e^{- i  \int_0^t L_u du} 1_{\{T_0\leq t\}}\right] dt \\
&= \int_0^{+\infty}e^{-\lambda t }\E_z\left[e^{-i  \int_0^t L_u du} \right] dt -\E_z\left[e^{-\lambda T_0-i  \int_0^{T_0} L_u du}\right]\int_0^{+\infty}e^{-\lambda t } \E_0\left[e^{-i  \int_0^t L_u du} \right]dt\\
&= \int_0^{+\infty}e^{-\lambda t }\E_0\left[e^{-i z t -i  \int_0^t L_u du} \right] dt  -  \frac{\Phi_\alpha\left(i^{\frac{1}{1+\alpha}}  \left(z+\frac{\lambda}{i}\right)\right)}{\Phi_\alpha\left(i^{-\frac{\alpha}{1+\alpha}}\lambda\right)}\int_0^{+\infty}e^{-\lambda t } \E_0\left[e^{-i  \int_0^t L_u du} \right]dt.
\end{align*}
We now divide this equality by $z$ and let $z\downarrow 0$. Using (\ref{eq:PT0}), the scaling property and the definition of the meander, the left-hand side converges towards 
$$\frac{1}{z}\int_0^{+\infty}e^{-\lambda t }\E_z\left[e^{- i  \int_0^t L_u du} \big|T_0>t\right] \Pb_z(T_0>t)dt \xrightarrow[z\downarrow 0]{}\frac{1}{\Gamma(1-\frac{1}{\alpha})}\int_0^{+\infty}e^{-\lambda t }\E\left[e^{- i  t^{1+1/\alpha} \mathcal{A}_{\text{me}}}\right] t^{-1/\alpha}  dt 
$$
while the right-hand side yields
\begin{equation}\label{eq:RHS}
-\int_0^{+\infty}i  t e^{-\lambda t }\E_0\left[e^{- i  \int_0^t L_u du} \right] dt - i^{\frac{1}{1+\alpha}} \frac{\Phi_\alpha^\prime\left(i^{-\frac{\alpha}{1+\alpha}}\lambda)\right)}{\Phi_\alpha\left(i^{-\frac{\alpha}{1+\alpha}}\lambda\right)}
\int_0^{+\infty}e^{-\lambda t } \E_0\left[e^{-i  \int_0^t L_u du} \right]dt.
\end{equation}
Let us set, to simplify the notation :
\begin{equation}\label{eq:F}
F_\alpha(\lambda)=\int_0^{+\infty}e^{-\lambda t } \E_0\left[e^{-i  \int_0^t L_u du} \right]dt. 
\end{equation}
From (\ref{eq:A1}), we deduce that for $\lambda \in \mathbb{C}$ :
\begin{align}
\notag F_\alpha(\lambda)&=  \int_0^{+\infty}e^{-\lambda t } \exp\left(e^{\frac{i \pi }{2}\alpha} \frac{t^{1+\alpha}}{1+\alpha}\right)dt\\
\notag &=\frac{1}{1+\alpha}\sum_{n=0}^{+\infty}  \frac{(-1)^n}{n!} \lambda^n \Gamma\left(\frac{n+1}{\alpha+1}\right) e^{\frac{i \pi (2-\alpha)}{2} \frac{n+1}{\alpha+1}}(1+\alpha)^{\frac{n+1}{\alpha+1}}\\
\label{phi+psi}&=\pi i^{-\frac{\alpha}{1+\alpha}} \left(\Psi_\alpha\left( i^{-\frac{\alpha}{1+\alpha}}\lambda\right) + i \Phi_\alpha \left( i^{-\frac{\alpha}{1+\alpha}}\lambda\right) \right)
\end{align}
which shows that $\Phi_\alpha$ and $\Psi_\alpha$ are closely related to the integral of $L$. Notice also that
\begin{equation}\label{F'}
-\int_0^{+\infty}i te^{-\lambda t } \E_0\left[e^{-i  \int_0^t L_u du} \right]dt = i F_\alpha^\prime(\lambda).
\end{equation}
Plugging (\ref{phi+psi}) and (\ref{F'}) in (\ref{eq:RHS}) finally yields the formula 
\begin{multline*}
\int_0^{+\infty}e^{-\lambda t} \E\left[e^{-it^{1+\frac{1}{\alpha}} \mathcal{A}_{\text{me}}}\right] t^{-\frac{1}{\alpha}}   dt \\=\Gamma\left(1-\frac{1}{\alpha}\right)i^{\frac{1-\alpha}{1+\alpha}}  \frac{\Psi_\alpha^\prime( i^{-\frac{\alpha}{1+\alpha}}\lambda) \Phi_\alpha( i^{-\frac{\alpha}{1+\alpha}}\lambda)- \Phi_\alpha^\prime( i^{-\frac{\alpha}{1+\alpha}}\lambda)\Psi_\alpha( i^{-\frac{\alpha}{1+\alpha}}\lambda)}{\Phi_\alpha( i^{-\frac{\alpha}{1+\alpha}}\lambda)}
\end{multline*}
and the announced result follows by analytical continuation.

\subsection{Proof of Theorem \ref{theo:2} :  first moment and asymptotics}
To simplify the following computation, we set 
$$H_\alpha(\lambda) = \pi \frac{\Psi_\alpha^\prime(\lambda) \Phi_\alpha(\lambda)- \Phi_\alpha^\prime(\lambda)\Psi_\alpha(\lambda)}{\Phi_\alpha(\lambda)}.$$
Differentiating the formula 
$$\int_0^{+\infty}e^{- t} \E\left[e^{-\left(\frac{t}{\lambda}\right)^{1+\frac{1}{\alpha}} \mathcal{A}_{\text{me}}}\right] t^{-\frac{1}{\alpha}}   dt = \Gamma\left(1-\frac{1}{\alpha}\right)\lambda^{1-\frac{1}{\alpha}}H_\alpha(\lambda)$$
 we deduce that
$$
 \left(\alpha+1\right)\int_0^{+\infty}e^{- t} \E\left[\mathcal{A}_{\text{me}}\,  e^{-\left(\frac{t}{\lambda}\right)^{1+\frac{1}{\alpha}} \mathcal{A}_{\text{me}}}\right] t  \, dt\\
  =\Gamma\left(1-\frac{1}{\alpha}\right)\bigg(  \left(\alpha-1\right)  \lambda^{2}H_\alpha(\lambda) +  \alpha \lambda^{3}H^\prime_\alpha(\lambda)\bigg)
$$
and the value  $\E\left[\mathcal{A}_{\text{me}}\right]$ will follow by letting $\lambda\rightarrow +\infty$ and applying the monotone convergence theorem on the left-hand side. Since the asymptotics of $\Phi_{\alpha}$ and $\Phi_\alpha^\prime$ are given in Proposition \ref{prop:phi} in the Appendix, it only remains to study those of $\Psi_\alpha$.
To this end, we observe from (\ref{phi+psi}) that 
$$\Psi_\alpha(\lambda) = \frac{i^{\frac{\alpha}{1+\alpha}}}{\pi} F_\alpha(\lambda i^{\frac{\alpha}{1+\alpha}}) - i \Phi_\alpha(\lambda).$$
Applying Watson's lemma, we deduce  from the definition (\ref{eq:F}) of $F_\alpha$ that
$$F_\alpha(\lambda i^{\frac{\alpha}{1+\alpha}}) \equi_{\lambda\rightarrow+\infty}  i^{- \frac{\alpha}{1+\alpha}}\sum_{n=0}^{+\infty} \frac{1}{n!} \frac{\Gamma\left(1+(1+\alpha)n\right)}{(1+\alpha)^n}  \lambda^{-n(1+\alpha)-1}$$
which implies, since $\Phi_\alpha$ decreases exponentially fast, the asymptotic expansion :
$$\Psi_\alpha(\lambda)  \equi_{\lambda\rightarrow+\infty} \frac{1}{\pi}\sum_{n=0}^{+\infty} \frac{1}{n!} \frac{\Gamma\left(1+(1+\alpha)n\right)}{(1+\alpha)^n}  \lambda^{-n(1+\alpha)-1}.$$
Therefore, going back to the definition of $H_{\alpha}$, we deduce that
\begin{equation}\label{eq:asympH}
H_\alpha(\lambda)  \mathop{=}\limits_{\lambda\rightarrow+\infty} \left(\lambda^{\frac{1}{\alpha}-1}  
-\frac{\alpha+1}{2\alpha} \lambda^{-2} +\Gamma(1+\alpha)\lambda^{-2-\alpha+\frac{1}{\alpha}}  \right)+ \text{o}(\lambda^{-2-\alpha + \frac{1}{\alpha}}).
\end{equation}
As a consequence, we obtain the limit 
$$\lim_{\lambda\rightarrow +\infty}  \left(\alpha-1\right)  \lambda^{2}H_\alpha(\lambda) +  \alpha \lambda^{3}H^\prime_\alpha(\lambda)
= \frac{(\alpha+1)^2}{2\alpha}$$
from which we deduce the value of the first moment 
$$\E\left[\mathcal{A}_{\text{me}}\right] = \Gamma\left(1-\frac{1}{\alpha}\right)\frac{\alpha+1}{2\alpha}.$$

\noindent
Next, to compute the asymptotics of $\Pb(\mathcal{A}_{\text{me}}>x)$, we shall work with Mellin transforms, which is a convenient tool when dealing with stable processes. Using Formula (\ref{eq:meander}) and applying the Fubini-Tonelli theorem, we have for $\nu\in \left(0,1-\frac{1}{\alpha}\right)$,

\begin{equation}\label{eq:MH}
\Gamma\left(1-\frac{1}{\alpha}\right)\int_0^{+\infty} \lambda^{\nu-1} H_\alpha(\lambda)d\lambda
= \frac{\alpha\Gamma(\nu)}{1+\alpha} \Gamma\left(\frac{\alpha-1-\alpha \nu}{\alpha+1}\right) \E\left[\left(\mathcal{A}_{\text{me}}\right)^{\frac{1-\alpha+\alpha\nu}{1+\alpha}}\right]. 
\end{equation}
Integrating twice by parts to remove the singularities at $\nu=1-\frac{1}{\alpha}$ and $\nu=2$, we further obtain 
\begin{multline*}
\Gamma\left(1-\frac{1}{\alpha}\right)\int_0^{+\infty} \lambda^{\nu-2} \left((\alpha-1) 2\lambda H_\alpha(\lambda)  + (4\alpha-1)\lambda^2 H_\alpha^{\prime} + \alpha \lambda^3 H_\alpha^{\prime\prime}(\lambda)\right) d\lambda\\
=(2-\nu) \Gamma(\nu)\Gamma\left(\frac{\alpha (2- \nu)}{\alpha+1}\right) 
(1-\alpha+\alpha\nu) \int_0^{+\infty} \lambda^{\nu-2} \lambda^{\frac{1}{\alpha}} \Pb\left(\mathcal{A}_{\text{me}}> \lambda^{1+\frac{1}{\alpha}}\right)d\lambda
\end{multline*}
where, as $\lambda\rightarrow+\infty$, the integrand on the left-hand side is equivalent to
$$(\alpha-1) 2\lambda H_\alpha(\lambda)  + (4\alpha-1)\lambda^2 H_\alpha^{\prime} + \alpha \lambda^3 H_\alpha^{\prime\prime}(\lambda) \equi_{\lambda\rightarrow +\infty} \Gamma(1+\alpha)(1+\alpha)^2(\alpha-1)\lambda^{-1-\alpha+\frac{1}{\alpha}}.$$
The Mellin transform on the left-hand side thus admits a simple pole at $\nu = 2+\alpha - \frac{1}{\alpha}$, hence so does the Mellin transform on the right-hand side. Applying the converse mapping theorem, we deduce that, as $\lambda\rightarrow +\infty$, 
$$
\Gamma\left(1-\frac{1}{\alpha}\right)\Gamma(1+\alpha)\lambda^{-1-\alpha+\frac{1}{\alpha}}
\equi_{\lambda\rightarrow+\infty}  -\Gamma\left(2+\alpha - \frac{1}{\alpha}\right)\Gamma\left(1-\alpha\right) 
\lambda^{\frac{1}{\alpha}} \Pb\left(\mathcal{A}_{\text{me}}> \lambda^{1+\frac{1}{\alpha}}\right)
$$
which yields the announced asymptotics.

\section{The area under $L$ conditioned to stay positive.}

\subsection{Proof of Theorem \ref{theo:3} : the double Laplace transform}
We proceed as for the meander. Applying the Markov property, we first have 
\begin{align*}
&\int_0^{+\infty}e^{-\lambda t }\E_z\left[L_t e^{- i  \int_0^t L_u du} 1_{\{T_0>t\}}\right] dt \\
&= \int_0^{+\infty}e^{-\lambda t }\E_z\left[L_t e^{-i  \int_0^t L_u du} \right] dt - \frac{\Phi_\alpha\left(i^{\frac{1}{1+\alpha}}  \left(z+\frac{\lambda}{i}\right)\right)}{\Phi_\alpha\left(i^{-\frac{\alpha}{1+\alpha}}\lambda\right)}\int_0^{+\infty}e^{-\lambda t } \E_0\left[L_t e^{-i  \int_0^t L_u du} \right]dt.
\end{align*}
Dividing both sides by $z$ and letting $z\downarrow 0$  then yields by definition of $\Pb_0^{\uparrow}$ 
\begin{multline*}
\int_0^{+\infty}e^{-\lambda t }\E_0^\uparrow\left[e^{- i \int_0^t L_u du}\right]dt \\
=\int_0^{+\infty}e^{-\lambda t }\E_0\left[(1-it L_t) e^{-i  \int_0^t L_u du} \right] dt - i^{\frac{1}{1+\alpha}} \frac{\Phi_\alpha^\prime\left(i^{-\frac{\alpha}{1+\alpha}}\lambda\right)}{\Phi_\alpha\left(i^{-\frac{\alpha}{1+\alpha}}\lambda\right)}
\int_0^{+\infty}e^{-\lambda t } \E_0\left[L_t e^{-i  \int_0^t L_u du} \right]dt.
\end{multline*}
Observe next that, integrating by parts the definition (\ref{eq:F}) of $F_\alpha$, we also have 
$$F_\alpha(\lambda) = \frac{1}{\lambda} - \frac{i}{\lambda}\int_0^{+\infty}e^{-\lambda t } \E_0\left[L_t e^{-i  \int_0^t L_u du} \right]dt.$$
As a consequence, we deduce from (\ref{phi+psi}) that 
\begin{align*}
&\int_0^{+\infty}e^{-\lambda t }\E_0^\uparrow\left[e^{- i  \int_0^t L_u du}\right] dt \\
&=-\lambda F_\alpha^\prime(\lambda)  - i^{-\frac{\alpha}{1+\alpha}} \frac{\Phi_\alpha^\prime\left(i^{-\frac{\alpha}{1+\alpha}}\lambda\right)}{\Phi_\alpha\left(i^{-\frac{\alpha}{1+\alpha}}\lambda\right)}\left(1-\lambda F_\alpha(\lambda)\right)\\
&=\pi i^{-\frac{2\alpha}{1+\alpha}}\lambda\frac{\Phi_\alpha^\prime\left(i^{-\frac{\alpha}{1+\alpha}}\lambda\right)\Psi_\alpha\left(i^{-\frac{\alpha}{1+\alpha}}\lambda\right)  -  \Psi^\prime_\alpha\left(i^{-\frac{\alpha}{1+\alpha}}\lambda\right)\Phi\left(i^{-\frac{\alpha}{1+\alpha}}\lambda\right) }{\Phi\left(i^{-\frac{\alpha}{1+\alpha}}\lambda\right)} -  i^{-\frac{\alpha}{1+\alpha}}  \frac{\Phi^\prime\left(i^{-\frac{\alpha}{1+\alpha}}\lambda\right)}{\Phi\left(i^{-\frac{\alpha}{1+\alpha}}\lambda\right)}
\end{align*}
and the result  follows as before by analytic continuation.

\subsection{Proof of Theorem \ref{theo:3} : asymptotics}
The asymptotics of the tail of $\mathcal{A}^{\uparrow}$ is easy to obtain as we can work with Laplace transforms and use repeatedly Karamata's Tauberian theorem. Indeed,  since we are dealing with monotone integrands, there is the asymptotics  for $\alpha<2$ :
$$
\int_0^{+\infty}e^{-\lambda t }\E\left[1-e^{- t^{1+\frac{1}{\alpha}}\mathcal{A}^\uparrow }\right] dt =\frac{1}{\lambda}+\lambda H_\alpha(\lambda)+\frac{\Phi_\alpha^\prime\left(\lambda\right)}{\Phi_\alpha\left(\lambda\right)}\equi_{\lambda\rightarrow +\infty} \frac{\Gamma(1+\alpha)}{\lambda^{1+\alpha-\frac{1}{\alpha}}}
$$
which implies that
$$\E\left[1-e^{- t^{1+\frac{1}{\alpha}}\mathcal{A}^\uparrow }\right]\;\equi_{t\rightarrow 0}\; \frac{\Gamma(1+\alpha)}{\Gamma(1+\alpha-\frac{1}{\alpha})} t^{\alpha-\frac{1}{\alpha}}$$
which in turn implies that
$$
\Pb(\mathcal{A}^{\uparrow}>x) \;\equi_{x\rightarrow +\infty}\; \frac{\Gamma(1+\alpha)}{\Gamma\left(1+\alpha-\frac{1}{\alpha}\right)\Gamma\left(2-\alpha\right)}x^{1-\alpha}.
$$

\section{Appendix on M-Wright's functions}\label{sec:A}

We gather and prove in this section several useful formulae for the M-Wright's function $\Phi_\alpha$ and its derivative.

\subsection{The integral representation}
Following the notation of \cite{GLM}, the classic Wright's function is defined by :
$$\phi(\rho, \beta; z) = \sum_{k=0}^{+\infty} \frac{z^k}{k! \Gamma(\rho k+\beta)},\qquad \rho>-1,\; \beta\in \C.$$
A special case of this function is obtained when $\rho = \beta-1=-\frac{1}{1+\alpha}$, for which, applying the compensation formula for the Gamma function, we obtain :
\begin{align*}
\Phi_\alpha(x) &= \frac{1}{\pi} (1+\alpha)^{-\frac{\alpha}{1+\alpha}} \,\phi\left(-\frac{1}{1+\alpha}, \frac{\alpha}{1+\alpha}; -x(1+\alpha)^{\frac{1}{1+\alpha}}\right)\\
&= \frac{1}{\pi}\sum_{n=0}^{+\infty}  \frac{(-1)^{n}}{n!}\Gamma\left(\frac{1+n}{1+\alpha}\right)
\sin\left(\pi\frac{1+n}{\alpha+1}\right) (1+\alpha)^{\frac{n-\alpha}{1+\alpha}}   x^n
\end{align*}
which is nowadays referred in the literature as an M-Wright's function. This function is also closely related to the probability density of the positive stable distribution of parameter $\frac{1}{\alpha+1}$, see Sato \cite[p.88]{Sat}. To get an integral expression, recall the formulae, since $\alpha>1$ :
$$\int_0^{\infty e^{\pm i\pi/(1+\alpha)}} z^{n} e^{\frac{z^{1+\alpha}}{1+\alpha}} dz =e^{\pm i\frac{\pi(n+1)}{1+\alpha}} (1+\alpha)^{\frac{n-\alpha}{\alpha+1}} \Gamma\left(\frac{n+1}{\alpha+1}\right)$$
which implies
\begin{align*}
\Phi_\alpha(x) &=\frac{1}{2\pi i}  \sum_{n=0}^{+\infty} \frac{(-1)^n}{n!} x^n \left(\int_0^{\infty e^{i\pi/(1+\alpha)}} z^{n} e^{\frac{z^{1+\alpha}}{1+\alpha}} dz - \int_0^{\infty e^{- i\pi/(1+\alpha)}} z^{n} e^{\frac{z^{1+\alpha}}{1+\alpha}} dz\right)  \\
& = \frac{1}{2\pi i} \int_{\infty e^{- i\pi/(1+\alpha)}}^{\infty e^{i\pi/(1+\alpha)}} e^{\frac{z^{1+\alpha}}{1+\alpha}-zx} dz. 
\end{align*}
Applying Cauchy's integral theorem, we may deform the path of integration to pass by the imaginary axis, and thus obtain the integral representation :
\begin{equation}\label{eq:phiint}
\Phi_{\alpha}(x) = \frac{1}{\pi}\int_0^{+\infty}e^{-\sin(\frac{\pi \alpha}{2}) \frac{z^{1+\alpha}}{1+\alpha}} \cos\left(\cos\left(\frac{\pi \alpha}{2}\right) \frac{z^{1+\alpha}}{1+\alpha} -zx\right) dz. 
\end{equation}

\subsection{Asymptotic expansion of $\Phi_\alpha$ and $\Phi_\alpha^\prime$}
We now study the asymptotics of $\Phi_\alpha(x)$ as $x\rightarrow +\infty$. A general (theoretical) asymptotic expansion for $\phi$ was computed by Wright \cite{Wri} (see also \cite[Theorem 2.1.3]{GLM}), but it seems difficult to extract from his formula an explicit expression for the coefficients. This will be our objective here.\\

\noindent
Recall the following definition of the coefficients $(c_p^{(\alpha)})$ :
$$
c_0^{(\alpha)}=1 \qquad \text{and}\qquad c_p^{(\alpha)} = \frac{1}{(2p)! \sqrt{\pi}}  \left(\frac{2}{\alpha}\right)^{p} \sum_{k=1}^{2p}  B_{2p,k}\,\Gamma\left(p+k+\frac{1}{2}\right) (2(\alpha-1))^{k}, \quad p\geq1
$$
where the sequence $(B_{n,k})$ is defined for any $n\geq1$ by 
$$
B_{n,1}= \frac{(2-\alpha)_{n-1}}{(n+1)(n+2)}\qquad \text{and}\qquad 
B_{n,k+1} = \frac{1}{k+1} \sum_{l=k}^{n-1}  \binom{n}{l} B_{n-l,1}\times B_{l, k}\quad k\geq1.
$$
We start with a simple lemma.

\begin{lemma}\label{lem:Binfty}
For any $k\geq 1$, there is the upper bound :
\begin{equation}\label{rec}
\forall n\geq 1,\qquad \frac{B_{n,k}}{n!} \leq \frac{(B_\infty)^k}{k!}\leq \frac{(1/4)^k}{k!}\qquad \text{where }\quad B_\infty = \sum_{n=1}^{+\infty} \frac{B_{n,1}}{n!}.
\end{equation}
\end{lemma}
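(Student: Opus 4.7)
The statement splits naturally into two inequalities, and I would attack them separately.

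The first inequality, $B_{n,k}/n! \leq B_\infty^k/k!$, is an immediate induction on $k$. First note that for $\alpha \in (1,2]$ the coefficients $B_{n,1}$ are non-negative (the Pochhammer $(2-\alpha)_{n-1}$ is a product of positive factors), and the recurrence (\ref{eq:defBnk}) then propagates non-negativity to every $B_{n,k}$. The base case $k=1$ is trivial, since $B_{n,1}/n!$ is one non-negative term of the defining series for $B_\infty$. For the inductive step, divide the recurrence by $n!$ to rewrite it in Cauchy-product form
$$\frac{B_{n,k+1}}{n!} = \frac{1}{k+1} \sum_{l=k}^{n-1} \frac{B_{n-l,1}}{(n-l)!}\cdot\frac{B_{l,k}}{l!},$$
apply the induction hypothesis to the last factor, factor out $B_\infty^k/k!$, and bound the remaining sum by $B_\infty$ (upon re-indexing $m=n-l$). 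This yields $B_\infty^{k+1}/(k+1)!$, completing the induction.

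For the second inequality I would compute $B_\infty$ in closed form. The key observation is the Beta-integral identity
$$\frac{1}{(n+1)(n+2)} = \int_0^1 t^n(1-t)\,dt,$$
which inserted into the series for $B_\infty$ allows me to swap sum and integral and obtain
$$B_\infty = \int_0^1 t(1-t)\sum_{m=0}^{+\infty}\frac{(2-\alpha)_m}{(m+1)!}\,t^m\,dt.$$
The inner series is antiderivative of the binomial series $(1-t)^{\alpha-2}$, giving the explicit expression $\bigl(1-(1-t)^{\alpha-1}\bigr)/((\alpha-1)t)$. After the substitution $u=1-t$ the remaining integral is elementary and yields $B_\infty = \tfrac{1}{2(\alpha+1)}$. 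Since $\alpha>1$, this is strictly less than $1/4$, which is the announced bound.

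The main obstacle is the closed-form evaluation of $B_\infty$: one has to recognise that the combination $(2-\alpha)_{n-1}/n!$ paired with the integral representation of $1/((n+1)(n+2))$ leads to a hypergeometric-type sum that can be integrated explicitly. Once this calculation is carried out the bound $1/(2(\alpha+1))\leq 1/4$ is immediate, and the induction argument is essentially bookkeeping.
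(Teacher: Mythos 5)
Your proof is correct. The induction on $k$ for the first inequality is exactly the paper's argument (divide the recurrence by $n!$, apply the hypothesis, and bound $\sum_{l}B_{n-l,1}/(n-l)!$ by $B_\infty$), so nothing to add there. Where you genuinely diverge is the bound $B_\infty\leq 1/4$: the paper disposes of it in one line by the crude termwise estimate $(2-\alpha)_{n-1}\leq (n-1)!$, which gives $B_\infty\leq\sum_{n\geq1}\frac{1}{n(n+1)(n+2)}=\frac14$, whereas you evaluate $B_\infty$ exactly. Your computation checks out: with $\frac{1}{(n+1)(n+2)}=\int_0^1 t^n(1-t)\,dt$, Tonelli (all terms are nonnegative for $1<\alpha\leq2$), the binomial series $(1-t)^{\alpha-2}=\sum_m\frac{(2-\alpha)_m}{m!}t^m$ and its termwise antiderivative, one gets
\begin{equation*}
B_\infty=\frac{1}{\alpha-1}\int_0^1 u\left(1-u^{\alpha-1}\right)du=\frac{1}{2(\alpha+1)},
\end{equation*}
which is consistent with the sanity checks $B_\infty=\frac16$ at $\alpha=2$ and $B_\infty\to\frac14$ as $\alpha\downarrow1$, and is indeed $<\frac14$ for $\alpha>1$. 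So your route costs an extra integral-series manipulation (and an implicit appeal to nonnegativity to justify the interchange, which you should state), but it buys the sharp constant $\frac{1}{2(\alpha+1)}$ rather than the mere bound $\frac14$; since the lemma is only used to get rough growth estimates on $c_n^{(\alpha)}$, the paper's quicker bound suffices, but your closed form is a nice refinement.
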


\begin{proof}
We prove the first inequality by iteration on $k$. For $k=1$, this is obviously true since $\dfrac{B_{n,1}}{n!} \leq B_\infty$. Assume now that (\ref{rec}) holds for all the integers up to some $k$. Then, from the definition (\ref{eq:defBnk}) of the sequence $(B_{n,k})$, we have
$$\frac{B_{n,{k+1}}}{n!} = \frac{1}{k+1} \sum_{l=k}^{n-1}  \frac{B_{n-l,1}}{(n-l)!} \frac{B_{l,k}}{l!}\leq  \frac{(B_{\infty})^k}{(k+1)!} \sum_{l=k}^{n-1}  \frac{B_{n-l,1}}{(n-l)!}\leq \frac{(B_{\infty})^{k+1}}{(k+1)!}$$
which proves the first inequality. The second one follows from the fact that $(2-\alpha)_{n-1} \leq  (n-1)!$ for all $n\geq 1$, hence, going back to the definition of the sequence $(B_{n,1})$, 
$$ B_\infty \leq  \sum_{n=1}^{+\infty} \frac{1}{n(n+1)(n+2)} = \frac{1}{4}.$$ 
\end{proof}
We may now compute the asymptotic expansion of $\Phi_\alpha$ and $\Phi_\alpha^\prime$.

\begin{proposition}\label{prop:phi}
We have the asymptotic expansions as $x\rightarrow +\infty$ :
$$\Phi_{\alpha}(x) \equi_{x\rightarrow +\infty} \frac{1}{\sqrt{2\pi\alpha}}x^{\frac{1-\alpha}{2\alpha}} e^{- \frac{\alpha}{\alpha+1} x^{1+1/\alpha}}\sum_{p=0}^{+\infty} 
(-1)^pc_{p}^{(\alpha)} x^{-p(1+1/\alpha)} $$
and
$$\Phi_\alpha^\prime(x) \equi_{x\rightarrow +\infty}  \frac{1}{\sqrt{2\pi\alpha}}x^{\frac{3-\alpha}{2\alpha}} e^{- \frac{\alpha}{\alpha+1} x^{1+1/\alpha}}\sum_{p=0}^{+\infty} 
(-1)^{p+1}d_p^{(\alpha)} x^{-p(1+1/\alpha)} $$
where $d_0^{(\alpha)}=1$ and for $p\geq 1$ :
$$d_p^{(\alpha)} = c_{p}^{(\alpha)}-c_{p-1}^{(\alpha)}\left(\frac{(2p-1)(\alpha+1)-2}{2\alpha}\right).$$
\end{proposition}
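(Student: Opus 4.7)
The plan is to apply the saddle-point (Laplace's) method to the contour integral representation $\Phi_\alpha(x) = \frac{1}{2\pi i}\int_{\infty e^{-i\pi/(1+\alpha)}}^{\infty e^{i\pi/(1+\alpha)}} e^{z^{1+\alpha}/(1+\alpha)-zx}dz$ obtained in the preceding subsection. First I would rescale $z = x^{1/\alpha} w$, reducing the problem to
$$\Phi_\alpha(x) = \frac{x^{1/\alpha}}{2\pi i}\int e^{x^{1+1/\alpha}g(w)}dw,\qquad g(w) = \frac{w^{1+\alpha}}{1+\alpha} - w,$$
which has a unique relevant saddle at $w_0=1$, with $g(1) = -\alpha/(1+\alpha)$ and $g''(1)=\alpha>0$. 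The steepest-descent direction is vertical, so I would deform the contour to $w = 1 + i\tau$ and substitute $\tau = sy/\sqrt\alpha$ with $y := x^{-(1+1/\alpha)/2}$ as the asymptotic parameter, so that the quadratic part of the exponent becomes $-s^2/2$. The combined prefactor $\frac{x^{1/\alpha}}{2\pi\sqrt{\alpha x^{1+1/\alpha}}}\cdot\sqrt{2\pi} = \frac{1}{\sqrt{2\pi\alpha}} x^{(1-\alpha)/(2\alpha)}$ multiplied by $e^{-\alpha x^{1+1/\alpha}/(1+\alpha)}$ already produces the leading behaviour.

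For the higher orders, I would expand the cubic-and-higher part of $g(1+i\tau)-g(1)$ using $g^{(k)}(1) = \alpha(\alpha-1)\cdots(\alpha-k+2)$ together with the algebraic identity $(\alpha-1)\cdots(\alpha-m) = (-1)^{m-1}(\alpha-1)(2-\alpha)_{m-1}$ to rewrite the exponent as
$$-\frac{s^2}{2} + \sum_{m\geq 1}\frac{(\alpha-1)(-i)^m B_{m,1}}{m!\,\alpha^{m/2}}s^{m+2}y^m,$$
with the paper's $B_{m,1} = (2-\alpha)_{m-1}/((m+1)(m+2))$ appearing naturally after factoring out $\alpha-1$. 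Exponentiating via the generating-function identity for the exponential partial Bell polynomials, together with the homogeneity $B_{n,k}(cd^j x_j) = c^k d^n B_{n,k}(x_j)$, produces a double series in which each $B_{n,k}$ evaluated on $(B_{\bullet,1})$ (i.e.\ the paper's $B_{n,k}$) appears weighted by $(\alpha-1)^k (-i)^n \alpha^{-n/2} s^{2k+n} y^n$. Termwise integration against $e^{-s^2/2}/\sqrt{2\pi}$ kills the odd-$n$ terms (so only $n=2p$ survives), replaces $s^{2(k+p)}$ by $2^{k+p}\Gamma(p+k+1/2)/\sqrt\pi$, and collapses to $\sum_p (-1)^p c_p^{(\alpha)} y^{2p}$ with exactly the formula (\ref{cp}).

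For $\Phi_\alpha'$, I would start from $\Phi_\alpha'(x) = -\frac{x^{2/\alpha}}{2\pi i}\int w\, e^{x^{1+1/\alpha}g(w)}dw$ and repeat the same analysis. The extra factor $w = 1 + isy/\sqrt\alpha$ splits into two pieces: the ``$1$'' piece regenerates the $c_p^{(\alpha)}$ expansion (now with the altered prefactor $x^{(3-\alpha)/(2\alpha)}$), while the ``$isy/\sqrt\alpha$'' piece shifts the Gaussian-moment index by one, converting the formerly odd-$n$ contributions into effectively even ones and producing exactly the correction $-c_{p-1}^{(\alpha)}((2p-1)(\alpha+1)-2)/(2\alpha)$. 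Putting the two contributions together, together with the overall minus from the initial $-z$, gives the stated closed form for $d_p^{(\alpha)}$.

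To upgrade the formal manipulations to genuine Poincar\'e asymptotic expansions, I would truncate each Taylor series at order $N$ and bound the remainders uniformly on the deformed contour; here the uniform bound $B_{n,k}/n! \leq (1/4)^k/k!$ from Lemma \ref{lem:Binfty} guarantees convergence of the Bell-polynomial double series and justifies termwise integration, while the tails of the contour (away from a neighbourhood of the saddle) contribute exponentially smaller terms. The hardest step will be the combinatorial bookkeeping for $\Phi_\alpha'$: matching the odd-moment contribution with the closed form $c_{p-1}^{(\alpha)}((2p-1)(\alpha+1)-2)/(2\alpha)$ requires a careful re-indexing of $B_{2p-1,k}$ in terms of $B_{2p-2,k}$ through the recurrence (\ref{eq:defBnk}) and the specific form of $B_{m,1}$.
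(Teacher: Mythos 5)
Your treatment of $\Phi_\alpha$ itself is essentially the paper's proof in different coordinates: the paper's deformation of the contour to the line $\{\Im(z)=-1\}$ is exactly your saddle at $w_0=1$ after a rotation, and the subsequent steps (rescaling by $\sqrt{\xi}$ with $\xi=x^{1+1/\alpha}$, exponentiating the cubic-and-higher part of the phase through the partial Bell polynomials $B_{n,k}$ built on $B_{m,1}=(2-\alpha)_{m-1}/((m+1)(m+2))$, Gaussian moments producing precisely (\ref{cp}), and Lemma \ref{lem:Binfty} to control termwise integration) all coincide with the paper. One caveat on rigor: the assertion that "the tails of the contour contribute exponentially smaller terms" is not automatic, since away from the saddle the non-Gaussian factor in the exponent could a priori be exponentially large; the paper proves $\Re\big(\varphi(u)\big)\le 0$ for real $u$ (equivalently, that $\Re\, g(1+i\tau)$ is maximal at $\tau=0$, because $\partial_\tau \Re (1+i\tau)^{1+\alpha}=-(1+\alpha)(1+\tau^2)^{\alpha/2}\sin(\alpha\arctan\tau)$ has the sign of $-\tau$) exactly to rule this out, and your argument needs that check, together with the splitting of the contour at $|s|=r\sqrt{\xi}$ since the Bell-series expansion of the exponential only converges there.

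The genuine gap is in the second half, for $\Phi_\alpha'$. Your plan extracts $d_p^{(\alpha)}$ directly from the extra factor $w=1+isy/\sqrt{\alpha}$; the odd-moment piece then contributes, at order $y^{2p}$, the quantity $\tilde c_p=\frac{1}{(2p-1)!\sqrt{\pi}}\left(\frac{2}{\alpha}\right)^{p}\sum_{k}B_{2p-1,k}\,\Gamma\!\left(p+k+\frac12\right)(2(\alpha-1))^{k}$, and your claimed closed form for $d_p^{(\alpha)}$ is equivalent to the identity $\tilde c_p=c_{p-1}^{(\alpha)}\,\frac{(2p-1)(\alpha+1)-2}{2\alpha}$. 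You flag this as "the hardest step" but give no argument, and I do not believe a "careful re-indexing of $B_{2p-1,k}$ in terms of $B_{2p-2,k}$ through the recurrence (\ref{eq:defBnk})" goes through as described: that recurrence does not relate odd-index to even-index Bell values in any direct way, and the natural proof of the identity is essentially the generating-function/differentiation argument you are trying to avoid. The paper sidesteps this entirely: it runs the same saddle-point analysis on $\Phi_\alpha'(x)=\frac{1}{2\pi i}x^{2/\alpha}\int z\,e^{i x^{1+1/\alpha}(e^{i\pi\alpha/2}z^{1+\alpha}/(1+\alpha)-z)}dz$ only to establish that an expansion of the stated shape exists with some coefficients $d_p^{(\alpha)}$, and then identifies them for free from the already-proved expansion of $\Phi_\alpha$, by integrating the expansion of $\big(x^{\frac{\alpha-1}{2\alpha}}e^{\frac{\alpha}{\alpha+1}x^{1+1/\alpha}}\Phi_\alpha(x)\big)'$ and matching powers of $x^{-(1+1/\alpha)}$; this yields $d_p^{(\alpha)}=c_p^{(\alpha)}-c_{p-1}^{(\alpha)}\frac{(2p-1)(\alpha+1)-2}{2\alpha}$ immediately. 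Either adopt that identification step, or supply an actual proof of the combinatorial identity above; as written, the $\Phi_\alpha'$ half of the proposition is not established.
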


\begin{proof} Since these asymptotics are already known for the Airy function, we shall assume in the following that $\alpha<2$.
We start with the asymptotic expansion of $\Phi_\alpha$. Coming back to the formula (\ref{eq:phiint}), we may write, using the change of variable  $z=x^{\frac{1}{\alpha}}y$ :
$$\Phi_{\alpha}(x) = \frac{x^{\frac{1}{\alpha}}}{2\pi} \int_{-\infty}^{+\infty}e^{i x^{1+\frac{1}{\alpha}}\left(e^{i\frac{\pi\alpha}{2}} \frac{y^{1+\alpha}}{1+\alpha} -y\right)} dy.$$
Applying Cauchy's integral theorem, we first deform the path of integration to pass through the line $\{z\in \C, \, \Im(z)=-1\}$ :
$$\Phi_\alpha(x) = \frac{x^{\frac{1}{\alpha}}}{2\pi} \int_{-\infty}^{+\infty}e^{i x^{1+\frac{1}{\alpha}}\left(i^\alpha \frac{(u-i)^{1+\alpha}}{1+\alpha} -(u-i)\right)} du.$$ 
Recall next the following Taylor expansion :
$$\frac{(u-i)^{1+\alpha}}{1+\alpha}= \frac{(-i)^{1+\alpha}}{1+\alpha} +  u (-i)^\alpha+ \frac{u^2}{2} \alpha (-i)^{\alpha-1} + \alpha (\alpha-1)\frac{u^3}{2} \int_0^1 (1-t)^{2} (tu-i)^{\alpha-2}dt$$
as well as Euler's integral formula for the hypergeometric function $_2F_1$ :
$$ \int_0^1 (1-t)^{2} (tu-i)^{\alpha-2}dt = \frac{1}{3}(-i)^{\alpha-2}\pFq{2}{1}{2-\alpha\quad 1}{4}{-iu}.$$
Setting $\xi=x^{1+1/\alpha}$ and making the change of variable  $u \sqrt{\xi}=z$, we further obtain 
\begin{equation}\label{F+}
\Phi_\alpha(\xi^{\frac{\alpha}{\alpha+1}}) = \xi^{\frac{1}{1+\alpha}-\frac{1}{2}}e^{-\frac{\alpha}{\alpha+1}\xi}\int_{-\infty}^{+\infty} e^{-\alpha\frac{z^2}{2} +  z^2\alpha (\alpha-1) \varphi\left(\frac{z}{\sqrt{\xi}}\right)}dz
\end{equation}
where
$$\varphi\left(\frac{z}{\sqrt{\xi}}\right)=-\frac{i}{6} \frac{z}{\sqrt{\xi}}\,  \pFq{2}{1}{2-\alpha\quad 1}{4}{-i\frac{z}{\sqrt{\xi}}}.$$ 
For $|z|<\sqrt{\xi}$, the definition of $_2F_1$ as a series yields the alternative expression :
$$\varphi\left(\frac{z}{\sqrt{\xi}}\right)=  \sum_{n=1}^{+\infty} \frac{(2-\alpha)_{n-1}}{(2+n)!}  \left(-i\frac{z}{\sqrt{\xi}}\right)^n.$$
By definition of the sequence $(B_{n,k},\; 1\leq n,\, 1\leq k\leq n)$, we have, still for  $|z|<\sqrt{\xi}$ :
$$e^{ z^2\alpha (\alpha-1) \varphi\left(\frac{z}{\sqrt{\xi}}\right)}
=1+ \sum_{n=1}^{+\infty} \frac{1}{n!}
\left(\frac{-i z}{\sqrt{\xi}}\right)^n \sum_{k=1}^n \big(z^{2} \alpha(\alpha-1)\big)^k B_{n,k}$$
which may be read as an asymptotic expansion in $\xi$. It remains now to plug this expansion in (\ref{F+}) and integrate term by term to obtain the announced result. However, some care is needed as the convergence of the expansion is not uniform in $z$, hence we cannot apply directly \cite[Theorem 1.7.5]{BlHa} for instance. Let $N\in \N$. Using the integral definition of the Gamma function and only keeping even terms, we have
\begin{align*}
&\left| \xi^{\frac{1}{2}-\frac{1}{1+\alpha}}e^{\frac{\alpha}{\alpha+1}\xi} \Phi_\alpha(\xi^{\frac{\alpha}{\alpha+1}}) -\frac{1}{\sqrt{2\pi\alpha}}\sum_{p=0}^{N-1} 
(-1)^pc_{p}^{(\alpha)} \xi^{-p} 
  \right|\\
  &\qquad \leq\int_{-\infty}^{+\infty} e^{-\alpha\frac{z^2}{2}  }  \left|   e^{ z^2\alpha (\alpha-1) \varphi\left(\frac{z}{\sqrt{\xi}}\right)}
  -  \frac{1}{\sqrt{2\pi \alpha}}-\sum_{p=1}^{N-1} \frac{(-1)^p}{(2p)!}
\left(\frac{z}{\sqrt{\xi}}\right)^{2p} \sum_{k=1}^{2p} \big(z^{2} \alpha(\alpha-1)\big)^k B_{2p,k}  \right|dz
\end{align*}
and we need to prove that this last quantity is smaller than $K\,\xi^{-N}$ for some constant $K$ independent from $\xi$.
We decompose this last integral according as $|z|<r \sqrt{\xi}$ or $|z|\geq r \sqrt{\xi}$ where $0<r<1$ is fixed. On the one hand, when $|z|<r \sqrt{\xi}$, we deduce from Lemma \ref{lem:Binfty} :
\begin{align*}
&\int_{-r \sqrt{\xi}}^{r\sqrt{\xi}} e^{-\alpha\frac{z^2}{2}  }  \left|  \sum_{p=N}^{+\infty} \frac{(-1)^p}{(2p)!}
\left(\frac{z}{\sqrt{\xi}}\right)^{2p} \sum_{k=1}^{2p} \big(z^{2} \alpha(\alpha-1)\big)^k B_{2p,k}  \right|dz\\
&\qquad\leq\int_{-r \sqrt{\xi}}^{r\sqrt{\xi}} e^{-\alpha\frac{z^2}{2}  } \sum_{p=0}^{+\infty} \frac{1}{(2p+2N)!}
\left(\frac{z}{\sqrt{\xi}}\right)^{2p+2N} \sum_{k=1}^{2p+2N} \big(z^{2} \alpha(\alpha-1)\big)^k B_{2p+2N,k} dz\\
&\qquad\leq  \frac{1}{\xi^{N}} \int_{-r \sqrt{\xi}}^{r\sqrt{\xi}}  e^{-\alpha\frac{z^2}{2}  }  \sum_{p=0}^{+\infty} 
 z^{2N} r^{2p} \sum_{k=1}^{2p+2N} \big(z^{2} \alpha(\alpha-1)\big)^k \frac{(1/4)^k}{k!}  dz\\
&\qquad\leq  \frac{1}{\xi^{N}} \sum_{p=0}^{+\infty} 
r^{2p} \int_{-r \sqrt{\xi}}^{r\sqrt{\xi}}  z^{2N} e^{-\alpha\frac{z^2}{2}  } e^{\frac{z^2\alpha(\alpha-1)}{4}}  dz\\
&\qquad\leq  \frac{1}{\xi^{N}}\frac{1}{1-r^2}
 \int_{-\infty}^{+\infty}  z^{2N} e^{-\frac{z^2}{4}\alpha(3-\alpha)   } dz
\end{align*}
which is finite since $\alpha\in (1,2)$. On the other hand, when $|z|\geq r \sqrt{\xi}$, we simply use the triangular inequality and first write :
$$\int_{|z|\geq r\sqrt{\xi}} e^{-\alpha\frac{z^2}{2}}  \left|   e^{ z^2\alpha (\alpha-1) \varphi\left(\frac{z}{\sqrt{\xi}}\right)}
\right| dz\leq \frac{1}{r^{2N}\xi^{N}}\int_{-\infty}^{+\infty} z^{2N}e^{-\alpha\frac{z^2}{2}}  \left|  e^{ z^2\alpha (\alpha-1) \varphi\left(\frac{z}{\sqrt{\xi}}\right)}\right| dz.$$
To check that this last integral may be bounded by a constant independent of $\xi$, we shall simply prove that $\Re(\varphi(u))\leq0$ for any $u\in \R$. Indeed, observe that
$$
\varphi\left(u\right)=-\frac{iu}{2} \int_0^{1} (1-t)^2 (tu+i)^{\alpha-2} dt=-\frac{iu}{2} \int_0^{1} \frac{(1-t)^2}{ (t^2u^2+1)^{\frac{2-\alpha}{2}}} e^{i\theta (\alpha-2)}   dt
$$
where $\theta=\theta(t,u)\in\left[-\frac{\pi}{2},\frac{\pi}{2}\right]$ is defined by
$$\cos(\theta)=\frac{1}{\sqrt{1+t^2u^2}}\qquad \text{ and }\qquad \sin(\theta)=\frac{tu}{\sqrt{1+t^2u^2}}.$$
Since $\alpha-2<0$, we deduce that the sign of  $\sin(\theta(\alpha-2))$ is the opposite of the sign of $u$, and thus
$$\Re(\varphi(u))=\frac{u}{2} \int_0^{1} \frac{(1-t)^2}{ (t^2u^2+1)^{\frac{2-\alpha}{2}}}\sin(\theta(\alpha-2))  dt\leq0.$$
Finally, the remaining term being polynomial, we have using that $\xi>1$, 
\begin{align*}
&\int_{|z|\geq r\sqrt{\xi}} e^{-\alpha\frac{z^2}{2}  }  \left| \frac{1}{\sqrt{2\pi \alpha}}+\sum_{p=1}^{N-1} \frac{(-1)^p}{(2p)!}
\left(\frac{z}{\sqrt{\xi}}\right)^{2p} \sum_{k=1}^{2p} \big(z^{2} \alpha(\alpha-1)\big)^k B_{2p,k}  \right|dz\\
&\qquad\leq  \frac{1}{r^{2N}\xi^{N}}\int_{-\infty}^{+\infty} z^{2N}e^{-\alpha\frac{z^2}{2}  }  \left( \frac{1}{\sqrt{2\pi \alpha}}+\sum_{p=1}^{N-1} \frac{1}{(2p)!}
\left(\frac{z}{\sqrt{\xi}}\right)^{2p} \sum_{k=1}^{2p} \big(z^{2} \alpha(\alpha-1)\big)^k B_{2p,k}  \right)dz\\
&\qquad\leq  \frac{1}{r^{2N}\xi^{N}}\int_{-\infty}^{+\infty} z^{2N}e^{-\alpha\frac{z^2}{2}  }  \left( \frac{1}{\sqrt{2\pi \alpha}}+\sum_{p=1}^{N-1} \frac{1}{(2p)!}
z^{2p} \sum_{k=1}^{2p} \big(z^{2} \alpha(\alpha-1)\big)^k B_{2p,k}  \right)dz
\end{align*}
which is also finite and thus ends the proof for $\Phi_\alpha$.\\

We now study the asymptotic expansion of $\Phi_\alpha^\prime$. Since $\alpha<2$, we may differentiate under the integral in (\ref{eq:phiint}) to obtain :
\begin{multline*}
\Phi_{\alpha}^\prime(x) = \int_0^{+\infty}e^{-\sin(\frac{\pi \alpha}{2}) \frac{z^{1+\alpha}}{1+\alpha}}z \sin\left(\cos\left(\frac{\pi \alpha}{2}\right) \frac{z^{1+\alpha}}{1+\alpha} -zx\right) dz\\
=\frac{1}{2\pi i} \int_{-\infty}^{+\infty}z e^{ie^{i\frac{\pi\alpha}{2}} \frac{z^{1+\alpha}}{1+\alpha}-izx} dz =  \frac{1}{2\pi i} x^{\frac{2}{\alpha}}\int_{-\infty}^{+\infty}ze^{i x^{1+\frac{1}{\alpha}}\left(e^{i\frac{\pi\alpha}{2}} \frac{z^{1+\alpha}}{1+\alpha} -z\right)} dz. 
\end{multline*}
Using similar computations  to those for $\Phi_\alpha$, we deduce that 
$$\Phi_\alpha^\prime(x) \equi_{x\rightarrow+\infty}  \frac{1}{\sqrt{2\pi\alpha}}x^{\frac{3-\alpha}{2\alpha}} e^{- \frac{\alpha}{\alpha+1} x^{1+1/\alpha}}\sum_{p=0}^{+\infty} 
(-1)^{p+1}d_p^{(\alpha)} x^{-p(1+1/\alpha)} $$
for some coefficients $(d_p^{(\alpha)})$. The formula for these coefficients follows then by integrating the asymptotic expansion of $\left(x^{\frac{\alpha-1}{2\alpha}} e^{\frac{\alpha}{\alpha+1} x^{1+1/\alpha}}\Phi_\alpha(x)\right)^{\prime}$ and identifying the successive powers.
\end{proof}

\subsection{Asymptotics of the sequence $(c_n^{(\alpha)}, n\geq0)$}

\begin{corollary}\label{coro:c}
There exists two constants $0<\kappa_1<\kappa_2<+\infty$ such that as $n\rightarrow +\infty$ :
$$ \kappa_1\, n \leq   \left(c_{n}^{(\alpha)}\right)^{\frac{1}{n}}\leq \kappa_2\, n.$$
\end{corollary}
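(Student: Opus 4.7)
The plan is to derive both inequalities directly from the closed form
\[
c_p^{(\alpha)} = \frac{1}{(2p)!\sqrt{\pi}}\left(\frac{2}{\alpha}\right)^p \sum_{k=1}^{2p} B_{2p,k}\,\Gamma\!\left(p+k+\tfrac{1}{2}\right)(2(\alpha-1))^k
\]
combined with Stirling's formula; both bounds will ultimately stem from a $p^p$-type growth coming from a single dominant term of the sum.

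For the upper bound, I would first apply the elementary Pochhammer estimate
\[
\frac{\Gamma(p+k+\tfrac12)}{\Gamma(p+\tfrac12)} = \prod_{j=0}^{k-1}\left(p+j+\tfrac{1}{2}\right) \leq (3p)^k,\qquad 1\leq k\leq 2p,
\]
and then invoke Lemma \ref{lem:Binfty} in the form $B_{2p,k}/(2p)!\leq (1/4)^k/k!$. Combining these two estimates collapses the sum to an exponential series,
\[
c_p^{(\alpha)} \leq \frac{\Gamma(p+\tfrac12)}{\sqrt{\pi}}\left(\frac{2}{\alpha}\right)^p \sum_{k=1}^{2p} \frac{(3p(\alpha-1)/2)^k}{k!} \leq \frac{\Gamma(p+\tfrac12)}{\sqrt{\pi}}\left(\frac{2}{\alpha}\right)^p e^{3p(\alpha-1)/2},
\]
and taking $p$-th roots together with $\Gamma(p+\tfrac12)^{1/p}\sim p/e$ produces the upper bound with any $\kappa_2$ slightly above $(2/\alpha)e^{3(\alpha-1)/2-1}$.

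For the lower bound, I would begin by observing that the summands are all non-negative: since $(2-\alpha)_{n-1}\geq 0$ for $\alpha\in(1,2]$, the recurrence (\ref{eq:defBnk}) propagates this to give $B_{n,k}\geq 0$ throughout, so it is legitimate to keep only the extremal $k=2p$ term. The key identity is that the recurrence degenerates on the diagonal: in the formula for $B_{n,n}$ the only surviving summand is $l=n-1$, which gives $B_{n,n}=B_{1,1}\,B_{n-1,n-1}$ and by iteration $B_{2p,2p}=(1/6)^{2p}$, independently of $\alpha$. This produces
\[
c_p^{(\alpha)} \geq \frac{\Gamma(3p+\tfrac12)}{(2p)!\sqrt{\pi}}\left(\frac{2}{\alpha}\right)^p \left(\frac{\alpha-1}{3}\right)^{2p},
\]
and Stirling's estimate $\Gamma(3p+\tfrac12)/(2p)!\sim (27/4)^p p^p e^{-p}/\sqrt{2\pi p}$ finally yields $(c_p^{(\alpha)})^{1/p}\geq \kappa_1 p$ with $\kappa_1 = 3(\alpha-1)^2/(2\alpha e)>0$. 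As a sanity check, at $\alpha=2$ this gives $\kappa_1 \to 3/(4e)$, which matches the exact coefficient obtained directly from the Airy formula $c_n^{(2)}=\Gamma(3n+1/2)/((2n)!\sqrt{\pi}\,9^n)$.

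The main conceptual hurdle is recognizing which of the $2p$ terms of the sum actually carries the $p^p$ growth: a choice like $k=1$, or any bounded $k$, yields only polynomial growth and cannot match the upper bound, whereas $k=2p$ does — thanks to the closed-form $(1/6)^{2p}$ for $B_{2p,2p}$. Once this dominant contribution is isolated and the uniform Pochhammer bound $(3p)^k$ used for the upper half, the proof reduces to routine Stirling asymptotics.
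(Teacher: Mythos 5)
Your argument is correct, and it follows the paper's strategy in its essentials: positivity of the $B_{n,k}$, the bound of Lemma \ref{lem:Binfty} for the upper estimate, and for the lower estimate the retention of the single diagonal term $k=2p$ with $B_{2p,2p}=6^{-2p}$ (which you in fact justify more explicitly than the paper, via the degeneration of the recurrence (\ref{eq:defBnk}) on the diagonal), followed by Stirling. The one point where you genuinely diverge is the upper bound: the paper keeps the factor $\Gamma(n+k+\frac12)/k!$ and splits the sum at $k=n$, treating the two ranges with separate Stirling-type estimates (the second range using $\alpha\le 2$), whereas you use the uniform Pochhammer bound $\Gamma(p+k+\tfrac12)\le\Gamma(p+\tfrac12)(3p)^k$ for $k\le 2p$, which collapses the whole sum into the exponential series $e^{3p(\alpha-1)/2}$; this is shorter, avoids the case analysis, and works verbatim for every $\alpha>1$, at the cost only of a slightly worse (but irrelevant) constant $\kappa_2$. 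Your sanity check at $\alpha=2$ against the explicit Airy coefficients is a nice confirmation that the dominant contribution is indeed the $k=2p$ term.
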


\begin{proof}
To get the lower bound, we simply observe that $B_{2n,2n}= \left(\frac{1}{6}\right)^{2n}$ and keep only the last term in the sum defining $c_n^{(\alpha)}$ :
$$
c_n^{(\alpha)} \geq \frac{1}{(2n)!\sqrt{\pi}}  \left(\frac{2}{\alpha}\right)^{n}  \left(\frac{1}{6}\right)^n\Gamma\left(3n+\frac{1}{2}\right) (2(\alpha-1))^{2n}.
$$
The result then follows from Stirling's asymptotics.
To get the upper bound, we first use Lemma \ref{lem:Binfty} to write
$$ c_n^{(\alpha)} \leq  \frac{1}{\sqrt{\pi}} \left(\frac{2}{\alpha}\right)^{n} \sum_{k=1}^{2n}  \frac{1}{k!}\Gamma\left(n+k+\frac{1}{2}\right) \left(\frac{\alpha-1}{2}\right)^{k}.$$
We now decompose this sum according as $k\leq n$ or $k\geq n+1$. For $k\leq n$, we have :
$$\frac{\Gamma\left(n+k+\frac{1}{2}\right)}{k!} =\left(k+\frac{1}{2}\right)\left(k+\frac{3}{2}\right)\ldots \left(k+\frac{1}{2}+n-1\right)\leq (2n)^n$$
hence  
\begin{equation}\label{sum:1}
\sum_{k=1}^{n} \left(\frac{\alpha-1}{2}\right)^{k} \frac{\Gamma\left(n+k+\frac{1}{2}\right)}{k!} \leq   \frac{\alpha-1}{3-\alpha} (2n)^n.
\end{equation}
For $k\geq n+1$, we deduce from Stirling's asymptotics that there exists a constant $\kappa>0$ such that 
\begin{align}
\notag\sum_{k=n+1}^{2n} \left(\frac{\alpha-1}{2}\right)^{k} \frac{\Gamma\left(n+k+\frac{1}{2}\right)}{k!}&\leq \kappa e^{-n}  \sum_{k=n+1}^{2n} \left(\frac{\alpha-1}{2}\right)^{k}    \frac{\left(n+k+\frac{1}{2}\right)^{n+k}}{k^{k+\frac{1}{2}}}\\
\notag&\leq \kappa e^{-n}   \left(3n+\frac{1}{2}\right)^{n-\frac{1}{2}} \sum_{k=n+1}^{2n}  \left(\frac{\alpha-1}{2}\right)^{k} \left(1+\frac{n+\frac{1}{2}}{k}\right)^{k+\frac{1}{2}}\\
\notag&\leq \kappa e^{-n}   \left(3n+\frac{1}{2}\right)^{n-\frac{1}{2}} \sum_{k=n+1}^{2n}  \left(\frac{\alpha-1}{2}\right)^{k} 2^{k+\frac{1}{2}}\\
\label{sum:2} & \leq  \kappa e^{-n}   \left(3n+\frac{1}{2}\right)^{n-\frac{1}{2}} \sqrt{2} n 
\end{align}
The result now follows by summing (\ref{sum:1}) and (\ref{sum:2}).
\end{proof}

\addcontentsline{toc}{section}{References}

\end{document}